\newtheorem{thm}{Theorem}
\newtheorem{prop}[thm]{Proposition}
\newtheorem{assert}[thm]{Assertion}
\newtheorem{remarks}[thm]{Remark}
\newtheorem{definition}[thm]{Definition}
\newtheorem{exl}[thm]{Example}
\numberwithin{thm}{section}
\newcommand{\adj}{\leftrightarrow}
\newcommand{\adjeq}{\leftrightarroweq}
\DeclareMathOperator{\id}{id}
\DeclareMathOperator{\Fix}{Fix}
\def\Z{{\mathbb Z}}
\def\N{{\mathbb N}}
\def\R{{\mathbb R}}
\begin{document}

\title{Remarks on Fixed Point Assertions in Digital Topology, 6}

\author{Laurence Boxer
\thanks{Department of Computer and Information Sciences, Niagara University, NY 14109, USA
and  \newline
Department of Computer Science and Engineering, State University of New York at Buffalo \newline
email: boxer@niagara.edu
}
}

\date{ }
\maketitle
\begin{abstract}
This paper continues a series discussing flaws in
published assertions concerning fixed points in
digital metric spaces.
\end{abstract}

\section{Introduction}
As stated in~\cite{Bx19}:
\begin{quote}
The topic of fixed points in digital topology has drawn
much attention in recent papers. The quality of
discussion among these papers is uneven; 
while some assertions have been correct and interesting, others have been incorrect, incorrectly proven, or reducible to triviality.
\end{quote}
Here, we continue the work
of~\cite{BxSt19,Bx19,Bx19-3,Bx20,Bx22}, 
discussing many shortcomings in earlier papers 
and offering corrections and improvements.

Quoting and paraphrasing~\cite{Bx22}:
\begin{quote}
Authors of many weak papers concerning 
fixed points in digital topology
seek to obtain results in a ``digital metric space" (see section~\ref{DigMetSp} for its definition).
This seems to be a bad idea. We slightly paraphrase~\cite{Bx20}:
\begin{quote}
\begin{itemize}
    \item Nearly all correct nontrivial published 
    assertions concerning digital
    metric spaces use the metric and do not use the
    adjacency. As a result, the digital metric space
    seems to be an artificial notion.
    \item If $X$ is finite (as in a ``real 
    world" digital image) or the metric $d$ 
          is a common metric such as any $\ell_p$ metric, then 
          $(X,d)$ is uniformly discrete as a topological space, 
          hence not very interesting.
    \item Many published assertions concerning
          digital metric spaces mimic analogues for subsets 
          of Euclidean~$\R^n$. Often, the authors neglect 
          important differences between the topological 
          space $\R^n$ and digital images, resulting in 
          assertions that are incorrect or incorrectly ``proven,"
          trivial, or trivial when restricted to conditions that many regard as 
          essential. E.g., in many cases, functions that
          satisfy fixed point assertions must be constant or fail to be digitally continuous~\cite{BxSt19,Bx19,Bx19-3}.
\end{itemize}
\end{quote}
\end{quote}
Since the publication of~\cite{Bx22}, additional
highly flawed papers rooted in digital metric spaces
have come to our attention. The current paper
discusses shortcomings 
in~\cite{BhardwajEtAl,ChauhanEtAl,JainK,JainR,JainAndKumar,JainEtAl,Kal-J,KamesmariEtAl,KrishnaEtAl,MasmaliEtAl,Shukla}.

\section{Preliminaries}
Much of the material in this section is quoted or
paraphrased from~\cite{Bx20}.

We use $\N$ to represent the natural numbers,
$\Z$ to represent the integers, and $\R$ to represent the reals.

A {\em digital image} is a pair $(X,\kappa)$, where $X \subset \Z^n$ 
for some positive integer $n$, and $\kappa$ is an adjacency relation on $X$. 
Thus, a digital image is a graph.
In order to model the ``real world," we usually take $X$ to be finite,
although there are several papers that consider
infinite digital images. The points of $X$ may be 
thought of as the ``black points" or foreground of a 
binary, monochrome ``digital picture," and the 
points of $\Z^n \setminus X$ as the ``white points"
or background of the digital picture.

\subsection{Adjacencies, 
continuity, fixed point}

In a digital image $(X,\kappa)$, if
$x,y \in X$, we use the notation
$x \adj_{\kappa}y$ to
mean $x$ and $y$ are $\kappa$-adjacent; we may write
$x \adj y$ when $\kappa$ can be understood. 
We write $x \adjeq_{\kappa}y$, or $x \adjeq y$
when $\kappa$ can be understood, to
mean 
$x \adj_{\kappa}y$ or $x=y$.

The most commonly used adjacencies in the study of digital images 
are the $c_u$ adjacencies. These are defined as follows.
\begin{definition}
Let $X \subset \Z^n$. Let $u \in \Z$, $1 \le u \le n$. Let 
$x=(x_1, \ldots, x_n),~y=(y_1,\ldots,y_n) \in X$. Then $x \adj_{c_u} y$ if 
\begin{itemize}
    \item $x \neq y$,
    \item for at most $u$ distinct indices~$i$,
    $|x_i - y_i| = 1$, and
    \item for all indices $j$ such that $|x_j - y_j| \neq 1$ we have $x_j=y_j$.
\end{itemize}
\end{definition}

\begin{definition}
{\rm \cite{Rosenfeld}}
A digital image $(X,\kappa)$ is
{\em $\kappa$-connected}, or just {\em connected} when
$\kappa$ is understood, if given $x,y \in X$ there
is a set $\{x_i\}_{i=0}^n \subset X$ such that
$x=x_0$, $x_i \adj_{\kappa} x_{i+1}$ for
$0 \le i < n$, and $x_n=y$.
\end{definition}

\begin{definition}
{\rm \cite{Rosenfeld, Bx99}}
Let $(X,\kappa)$ and $(Y,\lambda)$ be digital
images. A function $f: X \to Y$ is 
{\em $(\kappa,\lambda)$-continuous}, or
{\em $\kappa$-continuous} if $(X,\kappa)=(Y,\lambda)$, or
{\em digitally continuous} when $\kappa$ and
$\lambda$ are understood, if for every
$\kappa$-connected subset $X'$ of $X$,
$f(X')$ is a $\lambda$-connected subset of $Y$.
\end{definition}

\begin{thm}
{\rm \cite{Bx99}}
A function $f: X \to Y$ between digital images
$(X,\kappa)$ and $(Y,\lambda)$ is
$(\kappa,\lambda)$-continuous if and only if for
every $x,y \in X$, if $x \adj_{\kappa} y$ then
$f(x) \adjeq_{\lambda} f(y)$.
\end{thm}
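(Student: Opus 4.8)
The plan is to prove the two implications separately, in each case unwinding the definition of $(\kappa,\lambda)$-continuity (preservation of $\kappa$-connected subsets) together with the definition of $\kappa$-connectedness via $\kappa$-paths.

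For the forward direction, I would assume $f$ is $(\kappa,\lambda)$-continuous and fix $x,y \in X$ with $x \adj_{\kappa} y$. The key observation is that $\{x,y\}$ is itself a $\kappa$-connected subset of $X$: the two-term sequence $x_0=x$, $x_1=y$ witnesses connectedness precisely because $x \adj_{\kappa} y$. By continuity, $f(\{x,y\}) = \{f(x),f(y)\}$ is a $\lambda$-connected subset of $Y$. If $f(x)=f(y)$ we are done. Otherwise $\{f(x),f(y)\}$ is a two-element $\lambda$-connected set, so there is a $\lambda$-path joining its two distinct elements with all terms drawn from $\{f(x),f(y)\}$; since such a path is nonconstant, some consecutive pair of terms is $\{f(x),f(y)\}$ itself, forcing $f(x) \adj_{\lambda} f(y)$ by symmetry of adjacency. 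In either case $f(x) \adjeq_{\lambda} f(y)$.

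For the reverse direction, I would assume the stated adjacency condition and let $X'$ be an arbitrary $\kappa$-connected subset of $X$. To show $f(X')$ is $\lambda$-connected, pick $a,b \in f(X')$, say $a=f(x)$ and $b=f(y)$ with $x,y \in X'$. Connectedness of $X'$ supplies a $\kappa$-path $x=x_0, x_1, \ldots, x_n=y$ lying in $X'$. Applying the hypothesis to each consecutive pair gives $f(x_i) \adjeq_{\lambda} f(x_{i+1})$, i.e.\ either $\lambda$-adjacency or equality; deleting each index at which equality occurs collapses the sequence $f(x_0), \ldots, f(x_n)$ to a genuine $\lambda$-path from $a$ to $b$ all of whose terms lie in $f(X')$. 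Hence $f(X')$ is $\lambda$-connected.

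The argument is essentially routine. The only point demanding a little care is the forward direction's handling of the possibility $f(x)=f(y)$ — which is exactly why the conclusion must be phrased with $\adjeq_{\lambda}$ rather than $\adj_{\lambda}$ — and, in the reverse direction, the small bookkeeping of collapsing repeated vertices so that what remains qualifies as a path in the sense of the connectedness definition. No deeper obstacle is expected.
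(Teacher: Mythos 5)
Your proof is correct. Note that the paper itself states this theorem without proof, simply citing the original source \cite{Bx99}, so there is no in-paper argument to compare against; your two-implication argument --- using the two-point connected set $\{x,y\}$ for the forward direction and collapsing repeated values of $f$ along a $\kappa$-path for the reverse direction --- is exactly the standard proof of this characterization, and both delicate points you flag (the $f(x)=f(y)$ case forcing the $\adjeq_{\lambda}$ phrasing, and the bookkeeping needed because the paper's definition of a connected set requires strict adjacency between consecutive path terms) are handled properly.
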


We use $1_X$ to denote the identity function on $X$, 
and $C(X,\kappa)$ for the set of functions 
$f: X \to X$ that are $\kappa$-continuous.

A {\em fixed point} of a function $f: X \to X$ 
is a point $x \in X$ such that $f(x) = x$. We denote by
$\Fix(f)$ the set of fixed points of $f: X \to X$.

As a convenience, if $x$ is a point in the
domain of a function $f$, we will often
abbreviate ``$f(x)$" as ``$fx$".

\subsection{Digital metric spaces}
\label{DigMetSp}
A {\em digital metric space}~\cite{EgeKaraca15} is a triple
$(X,d,\kappa)$, where $(X,\kappa)$ is a digital image and $d$ is a metric on $X$. The
metric is usually taken to be the Euclidean
metric or some other $\ell_p$ metric; 
alternately, $d$ might be taken to be the
shortest path metric. These are defined
as follows.
\begin{itemize}
    \item Given 
          $x = (x_1, \ldots, x_n) \in \Z^n$,
          $y = (y_1, \ldots, y_n) \in \Z^n$,
          $p > 0$, $d$ is the $\ell_p$ metric
          if \[ d(x,y) =
          \left ( \sum_{i=1}^n
          \mid x_i - y_i \mid ^ p
          \right ) ^ {1/p}. \]
          Note the special cases: if $p=1$ we
          have the {\em Manhattan metric}; if
          $p=2$ we have the 
          {\em Euclidean metric}.
    \item \cite{Han05} If $(X,\kappa)$ is a 
          connected digital image, 
          $d$ is the {\em shortest path metric}
          if for $x,y \in X$, $d(x,y)$ is the 
          length of a shortest
          $\kappa$-path in $X$ from $x$ to $y$.
\end{itemize}

Under conditions in which a digital image 
models a ``real world" image, 
$X$ is finite or $d$ is (usually) an $\ell_p$ metric, so that 
$(X,d)$ is {\em uniformly discrete} as a topological space, i.e., there exists
$\varepsilon > 0$ such that for
$x,y \in X$, $d(x,y) < \varepsilon$ implies
$x=y$.

For an example of a digital metric space
that is not uniformly discrete, see
Example~2.10 of~{\rm \cite{Bx20}}.

We say a sequence $\{x_n\}_{n=0}^{\infty}$ is 
{\em eventually constant} if for some $m>0$, 
$n>m$ implies $x_n=x_m$.
The notions of convergent sequence and complete digital metric space are often trivial, 
e.g., if the digital image is uniformly 
discrete, as noted in the following, a minor 
generalization of results 
of~\cite{Han16,BxSt19}.

\begin{prop}
\label{eventuallyConst}
{\rm \cite{Bx20}}
Let $(X,d)$ be a metric space. 
If $(X,d)$ is uniformly discrete,
then any Cauchy sequence in $X$
is eventually constant, and $(X,d)$ is a complete metric space.
\end{prop}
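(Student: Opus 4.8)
The plan is to use the defining property of uniform discreteness in a completely direct way; this proposition is essentially a two-line unwinding of definitions, so I expect no real obstacle beyond bookkeeping with indices.

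First I would fix, by hypothesis, some $\varepsilon > 0$ such that for all $x,y \in X$, $d(x,y) < \varepsilon$ implies $x = y$. Let $\{x_n\}_{n=0}^{\infty}$ be an arbitrary Cauchy sequence in $X$. Applying the Cauchy condition to this particular $\varepsilon$, I obtain an index $m > 0$ such that $d(x_n, x_{n'}) < \varepsilon$ whenever $n, n' > m$.

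The key step is then to combine these two facts: for every $n > m$, taking $n' = m+1$ gives $d(x_n, x_{m+1}) < \varepsilon$, hence $x_n = x_{m+1}$ by uniform discreteness. This says precisely that $\{x_n\}$ is eventually constant, with eventual value $x_{m+1} \in X$.

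Finally, to deduce completeness I would note that an eventually constant sequence is convergent: for $n > m$ we have $d(x_n, x_{m+1}) = 0$, so $x_n \to x_{m+1}$, and this limit lies in $X$. Since the Cauchy sequence was arbitrary, every Cauchy sequence in $X$ converges in $X$, which is the definition of $(X,d)$ being complete. The only point requiring a moment's care is the index convention — one may equally well take the eventual value to be $x_m$ after adjusting the strict inequalities — and recalling that eventual constancy yields convergence immediately.
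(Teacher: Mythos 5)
Your proof is correct and is exactly the standard argument one expects here (the paper itself only cites \cite{Bx20} for this proposition rather than reproving it): extract $\varepsilon$ from uniform discreteness, feed it to the Cauchy condition to get eventual constancy, and observe that an eventually constant sequence converges to its eventual value, giving completeness. The index bookkeeping you flag is harmless and handled correctly.
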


Let $(X,d)$ be a metric space and
$f: X \to X$. We say $f$ is a 
{\em contraction map}~\cite{EgeKaracaBanach}
if for some $k \in [0,1)$ and
all $x,y \in X$, $d(f(x),f(y)) \le k d(x,y)$.
Such a function is not be confused with
a digital contraction~\cite{Bx94},
a homotopy between an identity map and
a constant function.

\section{On~\cite{BhardwajEtAl}}
This paper has so many typos and unexplained
symbols that it is extremely difficult to
understand. A few examples:
\begin{itemize}
    \item In Definitions~2.1, 2.3, 3.1, and 3.7,
          the ``$\to$" character appears where
          it seems likely that the intended
          character is ``$\times$" to indicate
          a Cartesian product.
    \item In Definition 2.1, it appears the 
          ``$S$" (appearing twice) is intended 
          to be ``$T$", since ``$S$" is
          not defined.
    \item In Definition~3.1, ``$X$" is 
          undefined. It seems likely ``$X$"
          is intended to be ``$\tilde{X}$"
          or something related to the
          latter. If the former, then this
          definition seems to duplicate
          Definition~2.3.
\end{itemize}

The assertions stated as Theorems~4.1 and~4.2 
of~\cite{BhardwajEtAl} promise existence of
unique fixed points. The assertions and their
respective arguments offered as proofs are
very difficult to follow, but one can easily
see that they fail to establish uniqueness of
fixed points.

\section{On~\cite{ChauhanEtAl}}
\subsection{``Theorem" 3.1 of~\cite{ChauhanEtAl}}
The following Definition~\ref{compatibleDef} appears 
in~\cite{ChauhanEtAl}, where it is incorrectly
attributed to~\cite{Bx10}, where it does
not appear. The inspiration for 
Definition~\ref{compatibleDef} may be the rather
different definition of compatible functions
appearing in~\cite{Jungck}.

\begin{definition}
    \label{compatibleDef}
    Suppose $(X,d,\rho)$ is a digital metric space.
    Suppose $P,Q: X \to X$. Then $P$ and $Q$ are {\em compatible}
    if 
    \[ d(PQx, QPx) \le d(Px,Qx) \mbox{ for all } x \in X.
    \]
\end{definition}

The following is stated as Theorem~3.1
of~\cite{ChauhanEtAl}.

\begin{assert}
    \label{Chauhan3.1}
    Let $P$, $Q$, $G$, and $H$ be quadruple
    mappings of a complete digital metric space
    $(X, d, \rho)$ satisfying the following.
    \begin{enumerate}
        \item $G(X) \subset Q(X)$ and
              $H(X) \subset P(X)$.
        \item Let $0 < \alpha < 1$. For all $x,y \in X$, 
              \[ d(x,y) = \alpha \max \left \{ \begin{array}{c}
           d(Gx,Hy), d(Gx,Px), d(Hy,Qy), d(Hx,Qx), \\
           \frac{1}{2}[d(Gx,Qy) + d(Hy,Px)] \end{array}
         \right \}.
        \]
        \item One of $P$, $Q$, $G$, and $H$ is continuous.
        \item The pairs $(P,G)$ and $(Q,H)$ are compatible.
    \end{enumerate}
    Then $P$, $Q$, $G$, and $H$ have a unique common fixed
    point in $X$.
\end{assert}

Flaws in this argument given for this assertion
in~\cite{ChauhanEtAl} include the following.
\begin{itemize}
    \item The first line of the ``proof" reverses the containments
          stated in item 1) of the hypotheses, stating that
          $Q(X) \subset G(X)$ and $P(X) \subset H(X)$.
    \item According to subsequent usage,
          the ``$=$"  in item 2) of the 
          hypothesis should be ``$\le$".
    \item Sequences $\{ \, x_n \, \}$ and $\{ \, y_n \, \}$ are
          constructed, via the rules:
          \begin{itemize}
              \item $x_0$ is an arbitrary point of $X$.
              \item inductively, $y_{2n} = Gx_{2n} = Qx_{2n+1}$ and
                    $y_{2n+1} = Hx_{2n+1}=Px_{2n+2}$.
          \end{itemize}
          Then the following statement appears.
          \[ d(y_{2n}, y_{2n+1}) = d(Gx_{2n},Hx_{2n+1}) \le          
          \]
          \begin{equation}
          \label{ChauhanIneqClaim}
          \alpha \max \left \{ \begin{array}{c}
             d(Px_{2n}, Qx_{2n+1}), d(Px_{2n}, Gx_{2n}),
             d(Qx_{2n+1}, Hx_{2n+1}),  \\
             d(Gx_{2n}, Hx_{2n+1}),
             \frac{1}{2}
             [d(Gx_{2n}, Qx_{2n+1}) + d(Px_{2n}, Hx_{2n+1})]
             \end{array} \right \}.
          \end{equation}
          This statement is not given a 
          justification, and we will show that
          it does not correspond to 
          item 2) of the hypotheses. Direct
          substitution into item 2) of 
          the hypotheses (with the substitution
          of ``$\le$" for ``=") yields the
          following.
          \begin{equation} 
          \label{ChauhanIneq}
          d(y_{2n},y_{2n+1}) \le 
          \alpha \max \left \{ 
          \begin{array}{c}
          d(Gy_{2n}, Hy_{2n+1}),
          d(Gy_{2n},Py_{2n}), \\
          d(Hy_{2n+1}, Qy_{2n+1}), 
          d(Hy_{2n},Qy_{2n}), \\
          \frac{1}{2}
          [d(Gy_{2n}, Qy_{2n+1}) +
                d(Hy_{2n+1},Py_{2n})]
          \end{array} \right \}
          \end{equation}
          The expression $Gy_{2n}$ appears 3
          times in~(\ref{ChauhanIneq}). No
          provision occurs in~\cite{ChauhanEtAl}
          for converting this expression into
          an expression of the sequence
          $\{ \, x_n \}$, as
          in~(\ref{ChauhanIneqClaim}).
\end{itemize}
We must conclude that Assertion~\ref{Chauhan3.1} is unproven.

\subsection{Example 3.2 of~\cite{ChauhanEtAl}}
This example is based on $[0,\infty)$, which
the authors want to consider as a digital
metric space. It is not a digital metric
space, as $[0,\infty)$ is not a subset of
$\Z^n$ for any~$n$. Nor is a conclusion stated for
this example.

\section{On~\cite{JainK,JainR}}
The papers~\cite{JainK,JainR} introduce, for
$f,g: X \to X$ on a digital metric space
$(X,d,\kappa)$, notions
of {\em compatible of type K} and
and {\em compatible of type R}, claiming that
they yield fixed point results. In
section~\ref{compatVariants} we discuss 
how these notions are related to other notions 
of compatibility that have appeared 
in the literature. In section~\ref{JainKAsserts}
we show flaws in the
fixed point assertions of~\cite{JainK,JainR}.

\subsection{Variants on compatibility}
\label{compatVariants}
In this section, we consider sequences 
$\{ \, x_n \, \} \subset X$ such that
\begin{equation}
\label{equiconverge}
    lim_{n \to \infty} fx_n =
       lim_{n \to \infty} gx_n = t \in X
\end{equation}

\begin{definition}
    \label{compatible}
    {\rm \cite{DalalEtAl}}
    Suppose $f$ and $g$ are self-functions on 
    a metric space $(X,d)$. If every sequence
    $\{ \, x_n \, \} \subset X$ 
    satisfying~{\rm (\ref{equiconverge})}
    also satisfies
    \[ lim_{n \to \infty} d(f(gx_n), g(fx_n))=0, 
    \]
    then $f$ and $g$ are {\em compatible}.
\end{definition}

\begin{definition}
{\rm \cite{JainK}}
\label{typeR}
    Let $(X,d)$ be a metric space.
    Let $f,g: X \to X$. We say $f$ and $g$ 
    are {\em digitally compatible of type K} if
    for every infinite sequence 
    $\{ \, x_n \, \} \subset X$ 
    satisfying~{\rm (\ref{equiconverge})} we have
\begin{equation}
\label{JainKEq}
\lim_{n \to \infty} d(ffx_n, gt) = 0
     ~~~ \mbox{and} ~~~
    \lim_{n \to \infty} d(ggx_n,ft) = 0.
\end{equation}
\end{definition}

\begin{definition}
{\rm \cite{JainR}}
\label{typeR}
    Let $(X,d)$ be a metric space.
    Let $f,g: X \to X$. We say $f$ and $g$ 
    are {\em digitally compatible of type R} if
    for every infinite sequence 
    $\{ \, x_n \, \} \subset X$ 
    satisfying~{\rm (\ref{equiconverge})} we have
    \[
    \lim_{n \to \infty} d(fgx_n, gfx_n) = 0
       = \lim_{n \to \infty} d(ffx_n, ggx_n).
    \]
\end{definition}

We have the following.

\begin{prop}
    \label{compatAndCompatK}
     Suppose S and T are self-functions on 
    a metric space $(X,d)$ that is
    uniformly discrete. Then
    $S$ and $T$ are compatible of type K
    if and only if $S$ and $T$ are compatible.
\end{prop}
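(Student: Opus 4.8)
The plan is to use uniform discreteness to collapse both notions to the same pointwise identity relating $S$ and $T$.

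First I would record the elementary fact that in a uniformly discrete metric space every convergent sequence is eventually constant: a convergent sequence is Cauchy, so this follows at once from Proposition~\ref{eventuallyConst}. Consequently, for any sequence $\{x_n\} \subset X$ satisfying~(\ref{equiconverge}) for the pair $S,T$, i.e.\ with $\lim_{n\to\infty} Sx_n = \lim_{n\to\infty} Tx_n = t \in X$, there is an index $N$ such that $Sx_n = t = Tx_n$ for all $n \ge N$.

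Second I would evaluate, for $n \ge N$, the compositions occurring in the two definitions: $S(Tx_n) = St$, $T(Sx_n) = Tt$, $S(Sx_n) = St$, and $T(Tx_n) = Tt$. Substituting these, one obtains for every $n \ge N$ the chain of equalities
\[
d(S(Tx_n),\, T(Sx_n)) = d(SSx_n,\, Tt) = d(TTx_n,\, St) = d(St,\, Tt).
\]
Hence each of the limits $\lim_n d(S(Tx_n), T(Sx_n))$, $\lim_n d(SSx_n, Tt)$, $\lim_n d(TTx_n, St)$ exists and equals $d(St, Tt)$, and each vanishes precisely when $St = Tt$. So, \emph{along an arbitrary test sequence}, the condition defining compatibility (Definition~\ref{compatible}) holds exactly when the condition defining compatibility of type~K holds. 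Since both definitions quantify over the same family of sequences --- all $\{x_n\} \subset X$ obeying~(\ref{equiconverge}) --- the two global properties are equivalent.

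This is largely bookkeeping once eventual constancy is available; the one place to be careful is verifying that all three distances in the two definitions reduce, after the sequence stabilizes, to the single quantity $d(St, Tt)$, so that no extra limiting conditions survive in either direction. I do not anticipate a genuine difficulty.
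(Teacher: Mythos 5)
Your proposal is correct and follows essentially the same route as the paper: both use uniform discreteness to make $Sx_n = Tx_n = t$ eventually, so that every distance appearing in either definition collapses to $d(St,Tt)$ and the two compatibility conditions coincide sequence by sequence. Your explicit observation that all three distances reduce to the single quantity $d(St,Tt)$ is a slightly tidier packaging of the same computation.
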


\begin{proof}
   Let $\{ \, x_n \, \} \subset X$ be a sequence
    satisfying~{\rm (\ref{equiconverge})} for the
    functions $S,T$. The uniform discreteness 
    hypothesis implies for almost all~$n$
    \[ Sx_n = t = Tx_n, ~~~ SSx_n = St, ~~\mbox{ and } ~~ TTx_n = Tt.
    \]
If $S$ and $T$ are compatible, 
    \[ d(SSx_n,Tt) = d(STx_n,TSx_n) = 0       
        = d(TSx_n,STx_n) = d(TTx_n, St)
    \]
    so $S$ and $T$ are compatible of
    type K.

    Suppose $S$ and $T$ are compatible of type K. Then
    for almost all~$n$, 
    \[ 0 = d(SSx_n,Tt) = d(STx_n, TSx_n)
    \]
    so $S$ and $T$ are compatible.
\end{proof}

\begin{prop}
\label{compatAndCompatR}
     Suppose S and T are self-functions on 
    a metric space $(X,d)$. If
    $S$ and $T$ are compatible of type R, then
    $S$ and $T$ are compatible. The converse
    is true if $(X,d)$ is uniformly discrete.
\end{prop}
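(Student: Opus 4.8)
The statement splits into two implications, and the plan is to handle them separately, the second by closely following the proof of Proposition~\ref{compatAndCompatK}.

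For the forward implication, I would simply unwind the definitions. Comparing Definition~\ref{compatible} with Definition~\ref{typeR} (compatible of type~R), one sees that the condition $\lim_{n \to \infty} d(S(Tx_n), T(Sx_n)) = 0$ defining compatibility is \emph{literally} the first of the two limit conditions required of functions that are compatible of type~R. Hence every sequence witnessing~(\ref{equiconverge}) for the pair $S,T$ automatically satisfies the compatibility conclusion, so compatibility of type~R implies compatibility with no further hypotheses.

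For the converse, assume $(X,d)$ is uniformly discrete and that $S$ and $T$ are compatible. Let $\{ \, x_n \, \} \subset X$ be any sequence with $\lim_{n \to \infty} Sx_n = \lim_{n \to \infty} Tx_n = t \in X$. As in the proof of Proposition~\ref{compatAndCompatK}, uniform discreteness forces $Sx_n = t = Tx_n$ for almost all $n$, whence $SSx_n = STx_n = St$ and $TSx_n = TTx_n = Tt$ for almost all $n$. Now compatibility gives $\lim_{n \to \infty} d(S(Tx_n), T(Sx_n)) = 0$; since this sequence is eventually the constant $d(St,Tt)$, we conclude $d(St,Tt) = 0$, i.e.\ $St = Tt$. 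Therefore, for almost all $n$, both $d(STx_n, TSx_n) = d(St,Tt) = 0$ and $d(SSx_n, TTx_n) = d(St,Tt) = 0$, so both limits in the definition of compatible of type~R vanish. Since the sequence was arbitrary, $S$ and $T$ are compatible of type~R.

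I do not anticipate a genuine obstacle here; the only point needing care is the bookkeeping of which composite equals $St$ and which equals $Tt$, together with the observation that compatibility alone already pins down $St = Tt$, so that the extra condition $\lim_{n \to \infty} d(SSx_n, TTx_n) = 0$ demanded by type~R comes for free. It may also be worth remarking that the uniform-discreteness hypothesis is essential for the converse, since in a general metric space compatibility controls only $d(S(Tx_n), T(Sx_n))$ and says nothing about $d(SSx_n, TTx_n)$.
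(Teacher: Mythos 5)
Your proposal is correct and follows essentially the same route as the paper: the forward implication is read off directly from the definitions, and the converse uses uniform discreteness to force $Sx_n = Tx_n = t$ eventually, so that all four composites collapse. In fact you are slightly more careful than the paper's own proof, which writes the chain $SSx_n = STx_n = TSx_n = TTx_n$ without explicitly flagging that the middle equality (i.e.\ $St = Tt$) comes from compatibility rather than from uniform discreteness alone.
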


\begin{proof}
It is clear from Definitions~\ref{compatible}
and~\ref{typeR} that a type R pair is a 
compatible pair.

Suppose $S$ and $T$ are compatible and
$(X,d)$ is uniformly discrete. Let 
$\{ \, x_n \, \} \subset X$ 
    satisfy~(\ref{equiconverge}).
\begin{itemize}
    \item By Definition~\ref{compatible}, 
      $\lim_{n \to \infty} d(fgx_n, gfx_n) = 0$.
    \item Since $(X,d)$ is uniformly discrete,
          for almost all~$n$ we have
          $fx_n = gx_n = t$. Therefore, 
          for almost all~$n$,
          \[ ffx_n = fgx_n = gfx_n = ggx_n.
          \]
\end{itemize}
Therefore, $S$ and $T$ are compatible of type R.
\end{proof}

Several variants of compatible functions
have been defined in the literature. These 
include compatible of type A~\cite{DalalEtAl},
compatible of type B~\cite{EgeEtAl},
compatible of type C~\cite{EgeEtAl}, and
compatible of type P~\cite{DalalEtAl}. 

\begin{thm}
\label{manyEquivs}
Let $(X,d)$ be a metric space
that is uniformly discrete.
Let $S,T: X \to X$. The following are equivalent.
\begin{itemize}
    \item $S$ and $T$ are compatible.
    \item $S$ and $T$ are compatible of type A.
    \item $S$ and $T$ are compatible of type B.
    \item $S$ and $T$ are compatible of type C.
    \item $S$ and $T$ are compatible of type K.
    \item $S$ and $T$ are compatible of type P.
    \item $S$ and $T$ are compatible of type R.
\end{itemize} 
\end{thm}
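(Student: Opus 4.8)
The plan is to show that, in a uniformly discrete metric space, each of the seven listed conditions is equivalent to the single, manifestly symmetric statement: \emph{for every sequence $\{x_n\} \subset X$ satisfying~(\ref{equiconverge}) with common limit $t$, one has $St = Tt$.} Since this statement mentions none of the seven notions, proving the equivalence with it for each notion separately yields the theorem at once. This also uniformly subsumes Propositions~\ref{compatAndCompatK} and~\ref{compatAndCompatR}.

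First I would extract the consequence of uniform discreteness. A sequence satisfying~(\ref{equiconverge}) consists of two Cauchy sequences converging to $t$, so by Proposition~\ref{eventuallyConst} each is eventually constant; hence $Sx_n = Tx_n = t$ for almost all $n$. Applying the self-functions $S$ and $T$ gives
\[
SSx_n = STx_n = St \quad\text{and}\quad TSx_n = TTx_n = Tt \quad\text{for almost all } n.
\]
Consequently every limit occurring in the definitions of compatible and of compatible of types A, B, C, K, P, and R exists and is evaluated by substituting these eventual values; in particular $\lim_n d(STx_n, TSx_n) = \lim_n d(SSx_n, TTx_n) = \lim_n d(STx_n, TTx_n) = \lim_n d(SSx_n,Tt) = \lim_n d(TTx_n,St) = d(St, Tt)$, while $\lim_n d(STx_n, St) = \lim_n d(SSx_n, St) = \lim_n d(TTx_n, Tt) = 0$.

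Next I would run through the definitions. For \emph{compatible} the defining limit is $d(St,Tt)$, so the condition reads $St=Tt$; likewise for \emph{type P} (limit $d(SSx_n,TTx_n)$) and for \emph{type R} (both defining limits equal $d(St,Tt)$). For \emph{type K} both defining limits equal $d(St,Tt)$, so again $St=Tt$. For \emph{type A} the two defining limits are $d(St,Tt)$ and $d(Tt,St)$, forcing $St=Tt$. For \emph{type B} the two defining inequalities become $d(St,Tt)\le \tfrac12(0+0)$ and its symmetric partner, again forcing $St=Tt$. For \emph{type C} the first defining inequality becomes $d(St,Tt)\le \tfrac13(0+0+d(St,Tt))$, i.e.\ $\tfrac23 d(St,Tt)\le 0$, and similarly for its partner, so once more $St=Tt$. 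Conversely, if $St=Tt$ for every sequence satisfying~(\ref{equiconverge}), then after substituting the eventual values each defining (in)equality holds. Thus each of the seven conditions is equivalent to ``$St=Tt$ for all such sequences,'' and hence all seven are mutually equivalent.

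The only genuinely necessary preparatory work is to record the precise definitions of compatible of types A, B, C, and P from~\cite{DalalEtAl,EgeEtAl}, since the entire argument is careful substitution into them. I expect the sole delicate point to be the type C bookkeeping: there the quantity $d(St,TTx_n)$ reappears on the right-hand side of the defining inequality and tends to $d(St,Tt)$, so one must observe that this merely weakens the inequality to $\tfrac23 d(St,Tt)\le 0$ rather than trivializing it.
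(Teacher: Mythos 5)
Your argument is correct, but it takes a genuinely different route from the paper. The paper's proof is an assembly of citations: the mutual equivalence of compatible and types A, B, C, and P is quoted from Theorem~3.9 of~\cite{Bx20}, and the K and R cases are handled by Propositions~\ref{compatAndCompatK} and~\ref{compatAndCompatR}, which it has just proved. You instead give a single self-contained argument: uniform discreteness makes every sequence satisfying~(\ref{equiconverge}) eventually satisfy $Sx_n=Tx_n=t$, hence all the iterated images stabilize at $St$ or $Tt$, so each of the seven defining conditions collapses to the one ``hub'' statement that $St=Tt$ for every such sequence. This is the same mechanism the paper uses inside Propositions~\ref{compatAndCompatK} and~\ref{compatAndCompatR}, but you apply it uniformly to all seven notions rather than importing the A/B/C/P part from prior work. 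What your version buys is transparency and independence from~\cite{Bx20}; what it costs is that the whole argument rests on transcribing the definitions of types A, B, C, and P exactly as they appear in~\cite{DalalEtAl,EgeEtAl} (the forms you assume are the standard ones, and your handling of the type~C inequality, where $d(St,Tt)$ recurs on the right-hand side and only yields $\tfrac23 d(St,Tt)\le 0$, is the right way to deal with the one non-trivial case). The paper's version is shorter but leaves the reader to chase the earlier theorem.
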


\begin{proof}
The equivalence of compatible, compatible of 
type A, compatible of type B,
compatible of type C, and
compatible of type P, is shown in
Theorem~3.9 of~\cite{Bx20}. 
The equivalence of
compatible and compatible of type K is shown 
in Proposition~\ref{compatAndCompatK}.
The equivalence of
compatible and compatible of type R is shown 
in Proposition~\ref{compatAndCompatR}.
\end{proof}

\subsection{On the fixed point assertion
of~\cite{JainK}}
\label{JainKAsserts}
The following is stated as Theorem~4.1
of~\cite{JainK}.

\begin{assert}
    \label{JainK4.1}
    Let $A,B,S,T: X \to X$ where
    $(X,d,\kappa)$ is a digital metric space
    such that
    \begin{enumerate}
        \item $S(X) \subset B(X)$ and
              $T(X) \subset A(X)$.
        \item For all $x,y \in X$ and some
              $\alpha \in (0,1)$,
              \[ \begin{array}{l}
              d(Sx,Ty) = \\
              \alpha \max \{ \,
              d(Ax,By), d(Ax,Sx), d(By,Ty),
              d(Sx,By), d(Ax,Ty) \, \}.
              \end{array}
              \]
              {\em [Probably, the comparison
              operator in the latter statement
              should be ``$\le$" rather than
              ``=".]}
       \item Pairs $(A, S)$ and $(B, T)$ are reciprocally continuous.
       \item $(A,S)$ and $(B,T)$ are pairs of
             compatible of type K functions.
    \end{enumerate}
    Then $A,B,S$, and $T$ have a unique common fixed point in $X$.
\end{assert}

We note the following flaw in the argument 
offered as proof of this assertion. A sequence
$\{ \, y_n \, \}$ is constructed. It is claimed
that this is a Cauchy sequence, ``From the proof
of [{\em what the current paper will 
refer to as}~\cite{JainEtAl}]".
We find that~\cite{JainEtAl} is listed 
in~\cite{JainK} as submitted to the
{\em Journal of Mathematical Imaging 
and Vision}. At the current writing, 
4 years after the publication of~\cite{JainK},
neither a search of the {\em JMIV} website 
nor a general Google search succeeds in 
locating~\cite{JainEtAl}.

We conclude that 
Assertion~\ref{JainK4.1} is unproven.

\subsection{On fixed point assertions
of~\cite{JainR}}
\begin{prop}
\label{JainR3.2}
Let $f,g: X \to X$ be compatible of type R on
a uniformly discrete digital metric space 
$(X,d, \kappa)$. If 
\begin{equation}
\label{JainR3.2Eq1}
    ft=gt \mbox{ for some $t \in X$},
\end{equation}
then $fgt = fft = ggt = gft$.
\end{prop}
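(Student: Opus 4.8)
The plan is to apply the definition of compatible of type R to a cleverly chosen (in fact, essentially forced) sequence, then exploit uniform discreteness to turn the limit statements into literal equalities for large $n$. First I would take the constant sequence $x_n = t$ for all $n$. Then $fx_n = ft$ and $gx_n = gt$ for every $n$, and by hypothesis~(\ref{JainR3.2Eq1}) these are equal; call the common value $s := ft = gt$. Hence $\lim_{n\to\infty} fx_n = \lim_{n\to\infty} gx_n = s \in X$, so the sequence $\{x_n\}$ satisfies condition~(\ref{equiconverge}) (with the role of ``$t$'' in that condition played by $s$).

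Next I would invoke Definition~\ref{typeR}: since $f$ and $g$ are compatible of type R and $\{x_n\}$ satisfies~(\ref{equiconverge}), we get
\[
\lim_{n\to\infty} d(fgx_n, gfx_n) = 0 = \lim_{n\to\infty} d(ffx_n, ggx_n).
\]
But for the constant sequence $x_n = t$ these ``limits'' are over constant sequences of real numbers: $d(fgx_n,gfx_n) = d(fgt,gft)$ and $d(ffx_n,ggx_n) = d(fft,ggt)$ for all $n$. Therefore $d(fgt,gft) = 0$ and $d(fft,ggt) = 0$, which by the positive-definiteness of the metric $d$ gives $fgt = gft$ and $fft = ggt$.

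It remains to link these two pairs, i.e.\ to show $fgt = fft$ (say), which then chains all four values together as $fgt = fft = ggt = gft$. Here I would use $gt = ft = s$ directly: $fgt = f(gt) = f(s) = f(ft) = fft$, simply because $gt$ and $ft$ are the same point of $X$, so applying $f$ to them yields the same result. Combining, $fft = fgt = gft$ and $fft = ggt$, hence $fgt = fft = ggt = gft$ as claimed.

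I expect no serious obstacle here; the only point requiring a little care is confirming that the constant sequence is a legitimate choice in Definition~\ref{typeR} (the definition speaks of ``every infinite sequence'' satisfying~(\ref{equiconverge}), and a constant sequence certainly qualifies), and noticing that one does not even need the uniform discreteness hypothesis for this particular argument — the constant sequence makes the limits trivial. If one instead wanted a proof that genuinely uses uniform discreteness (perhaps mirroring the style of Proposition~\ref{compatAndCompatK}), one could take any sequence with $fx_n \to s = ft = gt$ and use uniform discreteness to conclude $fx_n = gx_n = s$ for almost all $n$, whence $ffx_n = fgx_n = gfx_n = ggx_n$ eventually; but the constant-sequence route is cleanest and I would present that.
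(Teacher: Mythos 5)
Your proof is correct and follows essentially the same route as the paper's: apply the type-R condition to the constant sequence $x_n = t$ and then use $ft = gt$ to conclude $fgt = fft$. Your observation that uniform discreteness is not actually needed (the distance sequences $d(fgx_n,gfx_n)$ and $d(ffx_n,ggx_n)$ are constant, so their limits equal their values) is a valid minor sharpening of the paper's argument, which invokes uniform discreteness at that step.
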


\begin{proof}
We remark that this assertion is a modified
version of Proposition~3.2 of~\cite{JainR};
the latter appeared in~\cite{JainR} with neither 
proof nor citation. The modification we have
made is inclusion of the assumption of
uniform discreteness.

Let $x_n = t$ for all~$n$. From
    Definition~\ref{typeR},
    $\lim_{n \to \infty} d(fgx_n, gfx_n) = 0$
    and
     $\lim_{n \to \infty} d(ffx_n, ggx_n) = 0$.
    Since $(X,d)$ is uniformly discrete, for
    almost all~$n$,
    $fgx_n = gfx_n$ and $ffx_n = ggx_n$,
    hence $fgt = gft$ and $fft = ggt$.
    We complete the proof by observing
    that~(\ref{JainR3.2Eq1}) implies $fgt = fft$.
\end{proof}

\begin{prop}
    \label{JainR3.3}
    Let $f,g: X \to X$ be compatible of type R on
a uniformly discrete digital metric space 
$(X,d, \kappa)$. If $\{ \, x_n \, \}$ is a 
sequence in~$X$ satisfying~(\ref{equiconverge}), then 
\begin{enumerate}
    \item $ft=gt$;
    \item $\lim_{n \to \infty} gfx_n = ft$;
    \item  $\lim_{n \to \infty} fgx_n = gt$; and
    \item $fgt = gft$.
\end{enumerate}
\end{prop}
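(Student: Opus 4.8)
The plan is to use uniform discreteness to collapse every limit occurring in the definition of compatible of type R into an equality valid for all sufficiently large~$n$, and then to read off the four conclusions almost immediately.

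First I would note that since $\lim_{n\to\infty} fx_n = \lim_{n\to\infty} gx_n = t$ and $(X,d)$ is uniformly discrete, there is an index $N$ with $fx_n = t = gx_n$ for all $n \ge N$. Applying $f$ and $g$ to these equalities gives, for all $n \ge N$,
\[
ffx_n = ft, \qquad ggx_n = gt, \qquad gfx_n = gt, \qquad fgx_n = ft.
\]
For part~(1), Definition~\ref{typeR} yields $\lim_{n\to\infty} d(ffx_n, ggx_n) = 0$, which uniform discreteness upgrades to $ffx_n = ggx_n$ for almost all~$n$; combined with the first two displayed equalities this gives $ft = gt$. Parts~(2) and~(3) are then immediate: for $n \ge N$ we have $gfx_n = gt = ft$, so $\{gfx_n\}$ is eventually constant with value $ft$ and hence converges to $ft$; symmetrically $fgx_n = ft = gt$ for $n \ge N$ gives $\lim_{n\to\infty} fgx_n = gt$. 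For part~(4), I would invoke Proposition~\ref{JainR3.2}: part~(1) supplies its hypothesis $ft = gt$, so that proposition applied at the point $t$ gives $fgt = fft = ggt = gft$, in particular $fgt = gft$.

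I do not expect a genuine obstacle here; the one point needing a moment's care is that Definition~\ref{typeR} is stated for infinite sequences satisfying~(\ref{equiconverge}), so one must observe that the sequence $\{x_n\}$ given in the hypothesis is precisely of this form, whence the definition applies directly. If one prefers not to cite Proposition~\ref{JainR3.2}, part~(4) can be reproved on the spot: once part~(1) is known, the constant sequence $x_n = t$ satisfies~(\ref{equiconverge}) (its $f$- and $g$-images both tend to $ft = gt$), so plugging it into Definition~\ref{typeR} and appealing once more to uniform discreteness gives $fgt = gft$.
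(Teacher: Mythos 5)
Your proposal is correct and follows essentially the same route as the paper: collapse the limits via uniform discreteness to get $fx_n = gx_n = t$ eventually, deduce $ft = gt$ from one of the two limit conditions in the definition of type R compatibility (you use $\lim d(ffx_n,ggx_n)=0$ where the paper uses $\lim d(fgx_n,gfx_n)=0$; both work identically), read off (2) and (3) from eventual constancy, and obtain (4) from Proposition~\ref{JainR3.2}. Your inline alternative for (4) via the constant sequence $x_n=t$ is precisely the paper's proof of Proposition~\ref{JainR3.2} itself, so nothing new is needed there.
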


\begin{proof}
    We remark that this proposition is a
    modified version of Proposition~3.3
    of~\cite{JainR}, which appears there
    with neither proof nor citation. Our
    modification is to replace Jain's
    assumptions of digital continuity (which
    Jain may have confused with metric
    continuity, since it seems unlikely that one
    can easily show that
    the desired outcome follows from digital
    continuity) with the assumption of 
    ``uniformly discrete".

\begin{enumerate}
    \item From (\ref{equiconverge}) and the assumption
    of uniform discreteness, it follows that
    for almost all~$n$, $fx_n=gx_n=t$.
    From compatibility of type R, it follows
    that 
    \begin{equation}
    \label{JainR3.3Eq}
        0 = lim_{n \to \infty} d(fgx_n, gfx_n) =
           d(ft,gt).
    \end{equation}
    Thus, $ft = gt$.
    \item By the uniform discreteness property,
          $\lim_{n \to \infty} gfx_n = gt = ft$.
    \item Similarly,
          $\lim_{n \to \infty} fgx_n = ft = gt$.
    \item $fgt = gft$ by Proposition~\ref{JainR3.2}.
\end{enumerate}
\end{proof}

The following is stated as Theorem~4.1
of~\cite{JainR}.

\begin{assert}
    \label{JainR4.1}
    Let $(X,d,\kappa)$ be a digital metric space.
    Let $A,B,S,T: X \to X$ such that
    \begin{enumerate}
        \item $S(X) \subset B(X)$ and
              $T(X) \subset A(X)$;
        \item for $0 < \alpha < 1$ and all
              $x,y \in X$,
              \[ d(Sx,Ty) \le 
              \alpha \max \left \{
              \begin{array}{c}
              d(Ax,By), d(Ax,Sx), d(By,Ty), \\
              d(Sx,By), d(Ax,Ty)
              \end{array} \right \};
              \]
        \item one of the functions
              $A,B,S,T$ is continuous; and
        \item $(A,S)$ and $(B,T)$ are pairs of
              functions that are compatible of
              type R.
    \end{enumerate}
    Then $A,B,S$, and $T$ have a common
    fixed point in~$X$.
\end{assert}

The argument offered in~\cite{JainR} as proof of
Assertion~\ref{JainR4.1} is flawed as
follows. Sequences 
$\{ \, y_n \, \} \subset X$ and
$\{ \, d_n = d(y_n, y_{n+1}) \, \}$ are
constructed, and it is shown that
for all~$n$,
\begin{equation}
\label{JainR4.1eq1}
   d_{2n} \le \alpha \max \{ \,
          d_{2n-1}, d_{2n}, d_{2n-1} + d_{2n}
          \, \}
\end{equation}
and $d_{2n} \le d_{2n - 1}$, i.e.,
\begin{equation}
    \label{JainR4.1eq2}
    d(y_{2n}, y_{2n+1}) \le d(y_{2n-1}, y_{2n}).
\end{equation}
Note for inequality~(\ref{JainR4.1eq1}) 
the index on the left side is even,
and in~(\ref{JainR4.1eq2}), the smaller index
on the left side is even.
In an apparent attempt to obtain a
geometric series from the 
inequalities~(\ref{JainR4.1eq1}) 
and~(\ref{JainR4.1eq2}), it is claimed these
inequalities imply that
for $m,n \in \N$ with $m > n$,
\begin{equation}
 \label{JainR4.1eq3}
d(y_m, y_n) \le \alpha d(y_m,y_{m-1}) +
   \ldots + \alpha d(y_{n+1},y_n).
\end{equation}

But since there is no analogue of either of the 
inequalities~(\ref{JainR4.1eq1}) 
and~(\ref{JainR4.1eq2}) in which the
index on the left side of~(\ref{JainR4.1eq1})
is odd or the smaller index on the left
side of~(\ref{JainR4.1eq2}) 
is odd, it is not clear 
that one can
obtain~(\ref{JainR4.1eq3}).

Thus, we must consider Assertion~\ref{JainR4.1}
as unproven.

\section{On~\cite{JainAndKumar}}
We will show that the assertions of~\cite{JainAndKumar}
are, for the most part, trivial in that their
hypotheses are impossible.

\subsection{Some definitions and elementary properties}
\begin{definition}
    \label{PsiDef}
    {\rm \cite{JainAndKumar}}
    $\Psi$ is the set of nondecreasing functions
    $\psi: [0,\infty) \to [0,\infty)$ such that
  $\psi(0)=0$.
\end{definition}

\begin{remarks}
    \label{PsiFuncIs0}
Remark 2.13(1) of~\cite{JainAndKumar} says if 
$\psi: [0,\infty) \to [0,\infty)$
is nondecreasing such that
$\sum_{n=1}^{\infty} \psi^n(t) < \infty$ for all $t \ge 0$, 
then $\psi \in \Psi$. This turns 
out to be trivial, because
such a function must be constant, 
with value $0$.
\end{remarks}

\begin{proof}
    We have $\psi(x) \ge 0$ for all $x \in [0,\infty)$.
    Suppose there exists $t > 0$ such that
    $\psi(t) = q > 0$; let this be the base
    case of an induction to show $\psi^n(t) \ge q$
    for all $n \in \N$.

    Suppose we have $\psi^n(t) \ge q$ for $n \le k$.
    Since $\psi$ is nondecreasing,
    \[ \psi^{k+1}(t) = \psi(\psi^k(t)) \ge \psi(q) \ge q, \]
    which completes the induction.

    Therefore, $\sum_{i=1}^n \psi^n (t) \ge nq$ for all
    $n \in \N$, contrary to the assumption that
    $\sum_{n=1}^{\infty} \psi^n(t) < \infty$. The
    contradiction establishes the assertion.
\end{proof}

\subsection{Remark 3.2 of~\cite{JainAndKumar}}
\begin{definition}
{\rm \cite{JainAndKumar}}
\label{APexpansive}
Let $S: X \to X$ for a digital metric space
$(X,d,\kappa)$. Let
$\alpha: X \times X \to [0,\infty)$ and
$\psi \in \Psi$ such that for all $x,y \in X$,
\[ \psi(d(Sx,Sy)) \ge \alpha(x,y) d(x,y).
\]
Then $S$ is a {\em digital $\alpha - \psi$ expansive} mapping.
\end{definition}

A self-map $S$ on a metric
space $(X,d)$ is 
{\em expansive} if for all
$x,y \in X$,
$d(Sx,Sy) \ge d(x,y)$.

Remark 3.2 of~\cite{JainAndKumar} 
claims the following.
\begin{assert}
\label{JKremark3.2}
    Any expansive mapping is a digital
    $\alpha - \psi$ expansive mapping with
    $\alpha(x,y) = 1$ for all $x,y \in X$
    and $\psi(t) = kt$ for $0 < k < 1$.
\end{assert}

\begin{remarks}
Assertion~\ref{JKremark3.2} is
not generally true.
It is clear that $\id_X$ is an expansive
mapping. Let $S = \id_X$. Under the assumptions
of Assertion~\ref{JKremark3.2}, 
we would have from 
Definition~\ref{APexpansive}
the inequality
\[ k d(x,y) \ge d(x,y),~~~~~
\mbox{which is false for } x \neq y.
\]
\end{remarks}

\subsection{``Theorems" 3.4 and 3.5
of~\cite{JainAndKumar}}
\begin{definition}
    {\rm \cite{JainAndKumar}}
    Let $S: X \to X$ and 
    $\alpha: X \times X \to [0,\infty)$. We
    say $S$ is $\alpha$-admissible if
    for all $x,y \in X$,
    \[ \alpha(x,y) \ge 1 \Rightarrow 
        \alpha(Sx,Sy) \ge 1.
    \]
\end{definition}

The following is stated as Theorem~3.4
of~\cite{JainAndKumar}.
\begin{assert}
    \label{JKthm3.4}
    Let $(X,d,\kappa)$ be a 
    complete digital metric space.
    Let $S: X \to X$ be a bijective, digital
    $\alpha - \psi$-expansive mapping such that
    \begin{itemize}
        \item $S^{-1}$ is $\alpha$-admissible;
        \item for some $x_0 \in X$,
              $\alpha(x_0,S^{-1}(x_0)) \ge 1$;
              and
        \item $S \in C(X,\kappa)$.
    \end{itemize}
    Then $S$ has a fixed point in $X$.
\end{assert}

The following is stated as Theorem~3.5
of~\cite{JainAndKumar}.

\begin{assert}
    \label{JK3.5}
    Suppose we replace the continuity
    assumption in Assertion~\ref{JKthm3.4}
    by the assumption that
    \[
       \label{JKthm3.5eq}
         \mbox{if  $\{ \, x_n \, \} \subset X$
         and  $\lim_{n \to \infty} x_n =x$,
         then $\alpha(S^{-1}x_n, S^{-1}x) \ge 1$
         for all~$n$}.
    \]
    Then $S$ has a fixed point in $X$.
\end{assert}

\begin{remarks}
Both of 
Assertion~\ref{JKthm3.4} and
Assertion~\ref{JK3.5} are false, as
shown by the example
\[ S: \Z \to \Z \mbox{ given by }
S(x) = x+1;~~
\alpha(x,y) = 1.
\]
\end{remarks}

\subsection{Example 3.7}
Example~3.7 of~\cite{JainAndKumar}
wishes to consider a digital metric space
$(X,d,\kappa)$ and a function $T:X \to X$
defined by
\[ T(x) = \left \{ \begin{array}{ll}
   2x - \frac{11}{6} & \mbox{if } x > 1; \\
   \frac{x}{6} & \mbox{if } x \le 1.
   \end{array} \right .
\]
We are not told what $X$ is. From the 
definition of~$T$, we must have
$X \subset \Z$. But~$T$
does not appear to be 
integer-valued, e.g., if $1 \in X$ 
then $T(1) \not \in \Z$.

\section{On~\cite{Kal-J}}
We consider Theorems~3.1 
and~3.2 of~\cite{Kal-J}, stated below
as Theorem~\ref{KJthm3.1} and Assertion~\ref{KJthm3.2}, 
respectively.
We show that the former
reduces to triviality and
the latter is false.

The following is stated as Theorem~3.1 of~\cite{Kal-J}.

\begin{thm}
\label{KJthm3.1}
Let $(X,d,\kappa)$ be a complete digital metric space
and suppose $T: (X,d,\kappa) \to (X,d,\kappa)$ satisfies
$d(Tx,Ty) \le \psi(d(x,y))$ for all $x,y \in X$, where
$\psi: [0,\infty) \to [0,\infty)$ is monotone
non-decreasing and satisfies 
\begin{equation}
    \label{psiRaised}
    \lim_{n \to \infty} \psi^n(t) = 0 \mbox{ for all $t > 0$}.
\end{equation}
Then $T$ has a unique fixed point in $(X,d,\kappa)$.
\end{thm}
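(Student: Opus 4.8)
The plan is to observe that Theorem~\ref{KJthm3.1} makes no use of the adjacency $\kappa$ and is essentially Matkowski's contraction theorem for metric spaces, and then to prove it by the standard Picard iteration. The first step is to extract from the hypotheses the two elementary facts $\psi(0)=0$ and $\psi(t)<t$ for every $t>0$. For the first, if $\psi(0)=c>0$ then, $\psi$ being non-decreasing, $\psi^{n}(t)\ge\psi(0)=c$ for all $n\ge 1$ and all $t\ge 0$, contradicting~(\ref{psiRaised}); for the second, if $\psi(t_0)\ge t_0$ for some $t_0>0$, monotonicity gives $\psi^{n}(t_0)\ge t_0>0$ for all $n$, again contradicting~(\ref{psiRaised}). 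In particular $0\le\psi(s)\le s$ for all $s\ge 0$, so $\psi(s)\to 0$ as $s\to 0^{+}$.

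Next I would fix $x_0\in X$ and set $x_{n+1}=Tx_n$. If $d(x_0,x_1)=0$ then $x_0$ is already a fixed point, so assume $d_0:=d(x_0,x_1)>0$; a one-line induction using monotonicity of $\psi$ gives $d(x_n,x_{n+1})\le\psi^{n}(d_0)$, hence $d(x_n,x_{n+1})\to 0$ by~(\ref{psiRaised}). To show $\{x_n\}$ is Cauchy, fix $\varepsilon>0$; since $\psi(\varepsilon)<\varepsilon$ and $\psi^{n}(d_0)\to 0$, pick $N$ with $d(x_N,x_{N+1})<\varepsilon-\psi(\varepsilon)$ and prove by induction on $m\ge N$ that $d(x_N,x_m)\le\varepsilon$, the inductive step being
\[
d(x_N,x_{m+1})\le d(x_N,x_{N+1})+d(x_{N+1},x_{m+1})\le\bigl(\varepsilon-\psi(\varepsilon)\bigr)+\psi\bigl(d(x_N,x_m)\bigr)\le\varepsilon,
\]
where the last step uses the induction hypothesis $d(x_N,x_m)\le\varepsilon$ and monotonicity of $\psi$. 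By completeness $x_n\to x^{*}\in X$; letting $n\to\infty$ in $d(x^{*},Tx^{*})\le d(x^{*},x_{n+1})+\psi(d(x_n,x^{*}))$ gives $Tx^{*}=x^{*}$. Uniqueness is immediate: if $Tp=p$, $Tq=q$ and $p\ne q$ then $d(p,q)=d(Tp,Tq)\le\psi(d(p,q))<d(p,q)$, a contradiction.

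Finally I would make the remark that is really the point of this section: since $\kappa$ is never used, Theorem~\ref{KJthm3.1} is merely a restatement of a classical metric-space result. Moreover, for the digital metric spaces that actually model images---those with an $\ell_p$ metric, and more generally any $(X,d)$ that is uniformly discrete---the argument collapses entirely: from $d(x_n,x_{n+1})\to 0$ and uniform discreteness one gets $x_n=x_{n+1}$ for all large $n$, so $\{x_n\}$ is eventually a fixed point, and neither the Cauchy estimate nor completeness (nor even Proposition~\ref{eventuallyConst}) is needed. The only mildly nonroutine step above is the Cauchy argument---choosing $N$ so that the closed $\varepsilon$-ball about $x_N$ is forward-invariant under the contractive inequality---and even that is textbook and disappears in the uniformly discrete setting, which is why Theorem~\ref{KJthm3.1} ``reduces to triviality.''
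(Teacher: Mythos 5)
Your proof is correct, and it takes a genuinely different route from the paper. What you have written is the classical Matkowski fixed point theorem: you correctly extract $\psi(0)=0$ and $\psi(t)<t$ for $t>0$ from monotonicity and~(\ref{psiRaised}) (in each case by assuming the negation first, which is exactly what makes the inductive step $\psi(\psi^{n}(t_0))\ge\psi(t_0)$ legitimate), the forward-invariant $\varepsilon$-ball argument gives Cauchyness, and completeness together with $\psi(s)\to 0$ as $s\to 0^{+}$ yields existence; uniqueness is immediate from $\psi(t)<t$. The paper instead disposes of Theorem~\ref{KJthm3.1} via Proposition~\ref{psiTrivial}, which asserts that the hypotheses force $\psi$ to be identically $0$ and hence $T$ to be constant, so that the theorem ``reduces to triviality.'' That route, if valid, is far shorter, but its induction claims $\psi^{n+1}(t_0)\ge\psi(t_0)$ from monotonicity alone; the step $\psi(\psi^{k}(t_0))\ge\psi(t_0)$ needs $\psi^{k}(t_0)\ge t_0$, which is not available without the extra assumption $\psi(t_0)\ge t_0$ that you (correctly) introduce only for contradiction. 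Indeed $\psi(t)=t/2$ is monotone non-decreasing, satisfies $\psi^{n}(t)=t/2^{n}\to 0$, and is not the zero function, so the paper's reduction does not cover all maps satisfying the hypotheses, whereas your Picard-iteration proof establishes the theorem in full generality. Your closing observation---that in the uniformly discrete case $d(x_n,x_{n+1})\to 0$ already forces the orbit to be eventually constant---recovers, by a sound route, the ``triviality'' point the paper is actually after for digital metric spaces.
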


We show that
Theorem~\ref{KJthm3.1}
reduces to triviality.

\begin{prop}
    \label{psiTrivial}
    Let $\psi$ be as in Theorem~\ref{KJthm3.1}. Then 
    \begin{itemize}
    \item $\psi$ is the
          constant function
          with value 0.
    \item $T$ is a constant
          function.
    \end{itemize}
\end{prop}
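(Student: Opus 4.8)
The plan is to argue that the hypothesis \eqref{psiRaised} together with monotonicity forces $\psi$ to vanish identically on $[0,\infty)$, and then to invoke the resulting trivialization of $T$. For the first bullet, I would proceed by contradiction, essentially mirroring the argument already used in the proof of Remark~\ref{PsiFuncIs0}. Suppose $\psi(t_0)=q>0$ for some $t_0>0$. Since $\psi$ is monotone non-decreasing, if $q \ge t_0$ then $\psi(t_0) \ge t_0$ gives $\psi^2(t_0) = \psi(\psi(t_0)) \ge \psi(t_0) \ge q$, and inductively $\psi^n(t_0) \ge q$ for all $n$, contradicting $\lim_{n\to\infty}\psi^n(t_0)=0$. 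The remaining case is $0 < q < t_0$, i.e. $\psi(t_0) < t_0$; here I cannot iterate upward at $t_0$ directly, so instead I apply the hypothesis at the point $q$ itself: since $q>0$, \eqref{psiRaised} gives $\lim_{n\to\infty}\psi^n(q)=0$, so there is $N$ with $\psi^N(q) < q$. But $\psi(t_0)=q$ means, by monotonicity, $\psi^{k}(t_0) \le$ or $\ge$ comparisons are awkward — cleaner is: observe $\psi(t_0) = q$, hence $\psi^{n+1}(t_0) = \psi^n(q)$, so $\lim_{n\to\infty}\psi^{n+1}(t_0) = \lim_{n\to\infty}\psi^n(q)$, and the left side is $0$ by hypothesis applied at $t_0$, which is consistent and gives no contradiction this way. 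So instead, for the case $0<q<t_0$, I would note that since $\psi(0)$ need not be $0$ a priori we do not even know that — but actually $\psi \ge 0$ always, and the key point is that $\psi(q) \ge \psi(0) \ge 0$ need not help. Let me restate the clean dichotomy: either $\psi(t) \ge t$ for our troublesome $t$, handled above; or we instead observe that $q = \psi(t_0) > 0$ and apply the same contradiction machinery to $q$ in place of $t_0$, noting $\psi(q) \ge$? This only works if $\psi(q) \ge q$.

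The genuinely robust argument: fix any $t_0>0$ with $\psi(t_0)=q>0$. If $q \ge t_0$ we are done as above. If $q < t_0$, consider the value $s := \psi(q)$. If $s \ge q$ we get $\psi^n(q)\ge q$ for all $n$ contradicting \eqref{psiRaised} at $q>0$. If $s<q$, iterate: the sequence $a_0 := t_0 > a_1 := q > \dots$ defined by $a_{k+1}=\psi(a_k)$ — wait, $a_1 = \psi(a_0) = q$, and we want $a_1 > a_2 = \psi(q) = s$, etc. Actually the sequence $\psi^n(t_0)$ is non-increasing once it starts decreasing, because $\psi$ monotone non-decreasing implies: if $\psi^{k+1}(t_0) \le \psi^k(t_0)$ then $\psi^{k+2}(t_0) = \psi(\psi^{k+1}(t_0)) \le \psi(\psi^k(t_0)) = \psi^{k+1}(t_0)$. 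So once the orbit of $t_0$ decreases at one step it decreases forever, and it is bounded below by $0$; fine — that is consistent with convergence to $0$ and gives no contradiction. The real obstruction, and the step I expect to be the crux, is ruling out $\psi$ strictly positive but with orbits decreasing to $0$: e.g. $\psi(t) = t/2$. But wait — that $\psi$ does satisfy \eqref{psiRaised}! So the first bullet of Proposition~\ref{psiTrivial} as literally stated would be \emph{false} unless there is an additional hypothesis I am missing; re-reading, Theorem~\ref{KJthm3.1} as transcribed has only monotone non-decreasing plus \eqref{psiRaised}, and $\psi(t)=t/2$ satisfies both yet is not identically $0$.

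Given this, I would adopt the following corrected plan. The actual content of \cite{Kal-J}'s hypothesis must be stronger — almost certainly it additionally requires $\sum_{n=1}^\infty \psi^n(t) < \infty$ (a standard ``$\psi$ is a Matkowski / comparison function" condition, and exactly the hypothesis appearing in Remark~\ref{PsiFuncIs0}). Under that hypothesis the first bullet follows immediately from (the proof of) Remark~\ref{PsiFuncIs0}: if $\psi(t_0)=q>0$ for some $t_0>0$, monotonicity gives $\psi^n(t_0)\ge q$ for all $n$ — no, that is again only the $q\ge t_0$ case. The correct universal argument under the summability hypothesis: since the series $\sum \psi^n(t_0)$ converges, $\psi^n(t_0)\to 0$; but also for \emph{any} $t>0$ in the orbit we could run the argument. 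Hmm — the cleanest: suppose $\psi$ is not identically zero, so $\psi(t_0) = q > 0$; since $\psi$ is non-decreasing and $\psi \ge 0$, for every $t \ge t_0$ we have $\psi(t) \ge q$, so $\psi^n(t_0) \ge q$ \emph{provided} $\psi^{n-1}(t_0) \ge t_0$. This needs $q \ge t_0$. If $q<t_0$, replace $t_0$ by $q$: we have $\psi(q) \ge ?$ — stuck again unless $\psi(q)\ge q$. So honestly the fully general claim requires that there is \emph{some} $t>0$ with $\psi(t)\ge t$, which is \emph{not} automatic.

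I therefore expect the paper's actual proof to use the summability hypothesis and to argue: if $\psi \not\equiv 0$, pick $t_0$ with $\psi(t_0)>0$; then (key step) show $\psi(t)>0$ for all $t>0$ in the relevant range and derive that the tail sums $\sum_{n\ge k}\psi^n(t_0)$ cannot shrink, or more simply that monotonicity + $\psi(t_0)=q>0$ forces $\psi^n(t_0)\ge \psi^n(\,\cdot\,)$ comparisons yielding $\sum \psi^n(t_0) = \infty$, contradiction. Once $\psi \equiv 0$ is established, the second bullet is immediate: the contraction inequality $d(Tx,Ty)\le \psi(d(x,y)) = 0$ for all $x,y\in X$ forces $Tx = Ty$ for all $x,y$, i.e. $T$ is constant. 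The main obstacle, as the above wrestling shows, is pinning down exactly which hypothesis on $\psi$ is in force and producing the correct monotonicity-based contradiction for the ``small positive value" case — I would handle it by invoking the summability condition (as in Remark~\ref{PsiFuncIs0}) rather than \eqref{psiRaised} alone, since \eqref{psiRaised} alone is insufficient, and note this discrepancy explicitly if \cite{Kal-J} really states only \eqref{psiRaised}.
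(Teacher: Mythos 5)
You did not deliver a proof, and you were right not to: the first bullet of Proposition~\ref{psiTrivial} is false as stated, and the example you stumbled onto, $\psi(t)=t/2$, is a genuine counterexample --- it is monotone non-decreasing, maps $[0,\infty)$ into $[0,\infty)$, and satisfies $\psi^n(t)=t/2^n\to 0$ for every $t>0$, yet is not identically zero (and a $T$ with $d(Tx,Ty)\le d(x,y)/2$ need not be constant on a general digital metric space). The paper's own proof is exactly the argument you anticipated and rejected: it supposes $\psi(t_0)>0$ and asserts that ``a simple induction shows that $\psi^{n+1}(t_0)\ge\psi(t_0)>0$ for all $n$,'' contradicting~(\ref{psiRaised}). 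That induction needs $\psi(q)\ge q$ at $q=\psi(t_0)$, which monotonicity alone does not supply; for $\psi(t)=t/2$ and $t_0=1$ the claimed inequality already fails at the first step, since $\psi^2(1)=1/4<1/2=\psi(1)$. So the obstruction you circled --- the ``small positive value'' case $\psi(t_0)<t_0$ --- is precisely the hole in the paper's argument, and it cannot be patched.

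Two further remarks. First, your fallback plan of replacing~(\ref{psiRaised}) by the summability condition $\sum_{n}\psi^n(t)<\infty$ and arguing as in Remark~\ref{PsiFuncIs0} does not rescue anything: $\psi(t)=t/2$ satisfies $\sum_n t/2^n=t<\infty$, so it is equally a counterexample to Remark~\ref{PsiFuncIs0}, whose proof contains the same unjustified step $\psi(q)\ge q$. Second, the one implication you state correctly is the conditional one: \emph{if} $\psi\equiv 0$, then $d(Tx,Ty)\le\psi(d(x,y))=0$ forces $T$ constant; but since the antecedent is not a consequence of the hypotheses of Theorem~\ref{KJthm3.1}, the second bullet is not established by this route either. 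A correct version of the proposition would need an additional hypothesis such as $\psi(t)\ge t$ for all $t>0$, or else should be replaced by a direct argument that a $\psi$-contraction with $\psi(t)<t$ is constant under uniform discreteness and connectedness assumptions.
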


\begin{proof}
    Suppose we have some $t_0$ for which $\psi(t_0) > 0$. Then a simple induction
    shows that $\psi^{n+1}(t_0) \ge \psi(t_0) > 0$ for all $n \in \N$, contrary
    to~(\ref{psiRaised}).
    Thus $\psi$ must be the
          constant function
          with value 0.

    It follows that
    $d(Tx,Ty) = 0$ for
    all $x,y \in X$, so
    $T$ is a constant function.
\end{proof}

Theorem~3.2 of~\cite{Kal-J} depends on
the following.

\begin{definition}
    {\rm \cite{DolhareEtAl}}
    \label{weaklyUniformlyStrict}
    Let $(X,d,\kappa)$ be a digital metric
    space. Then $T: X \to X$ is a
    {\em weakly uniformly strict digital 
    contraction} if given $\varepsilon > 0$
    there exists $\delta > 0$ such that
    $\varepsilon \le d(x,y) < \varepsilon + \delta$
    implies $d(Tx,Ty) < \varepsilon$ 
    for all $x,y \in X$.
\end{definition}

But Definition~\ref{weaklyUniformlyStrict}
turns out to be a triviality in many
important cases, as shown by the
following.

\begin{prop}
\label{negateKJ3.2}
Let $(X,d)$ be a finite metric 
space. Then every weakly uniformly 
strict digital contraction on~$X$
is a contraction map.
\end{prop}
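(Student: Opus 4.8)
The plan is to exploit two facts: that the weakly uniformly strict condition, applied with a cleverly chosen $\varepsilon$, forces $T$ to strictly decrease every positive distance; and that finiteness of $X$ upgrades ``strictly decreases every distance'' to ``decreases every distance by a uniform factor $k<1$''.

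First I would observe that for any $x,y\in X$ with $x\neq y$, one may apply Definition~\ref{weaklyUniformlyStrict} with the particular choice $\varepsilon=d(x,y)>0$. Whatever $\delta>0$ the definition produces, the pair $(x,y)$ satisfies $\varepsilon\le d(x,y)<\varepsilon+\delta$, so the conclusion gives $d(Tx,Ty)<\varepsilon=d(x,y)$. Hence $d(Tx,Ty)<d(x,y)$ for all $x\neq y$ in $X$. (Note this is the only place the hypothesis is used; the definition does not literally assert $d(Tx,Ty)<d(x,y)$, so it must be instantiated in this way.)

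Next I would pass to a uniform ratio. Since $X$ is finite, the set
\[
R=\Bigl\{\,\tfrac{d(Tx,Ty)}{d(x,y)} : x,y\in X,\ x\neq y\,\Bigr\}
\]
is finite. By the previous step each element of $R$ is strictly less than $1$, so if $R\neq\emptyset$ its maximum $k=\max R$ lies in $[0,1)$; if $R=\emptyset$, i.e.\ $|X|\le 1$, simply set $k=0$. Then for all $x,y\in X$ we have $d(Tx,Ty)\le k\,d(x,y)$: when $x=y$ both sides vanish, and when $x\neq y$ this is the inequality $d(Tx,Ty)/d(x,y)\le k$. Therefore $T$ is a contraction map with constant $k$.

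There is no genuinely hard step; the only points needing care are the instantiation $\varepsilon=d(x,y)$ above and the degenerate case $|X|\le 1$, where the maximum is over the empty set and one takes $k=0$. Finiteness of $X$ is essential only so that the supremum of the ratios is attained and hence genuinely $<1$; for infinite $X$ these ratios could tend to $1$ and the conclusion would fail, which is why the statement is restricted to finite metric spaces.
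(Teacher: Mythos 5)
Your proposal is correct and follows essentially the same route as the paper: instantiate the definition with $\varepsilon=d(x,y)$ to get $d(Tx,Ty)<d(x,y)$ for all $x\neq y$, then use finiteness of $X$ to extract a uniform constant $k\in[0,1)$. You merely spell out explicitly (via the maximum of the finitely many ratios, and the degenerate case $|X|\le 1$) what the paper leaves implicit in its final sentence.
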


\begin{proof}
Let $x \neq y$ in~$X$. Let
$\varepsilon = d(x,y) > 0$.
Let $T: X \to X$ be a weakly 
uniformly strict digital contraction 
on~$X$. It follows from
Definition~\ref{weaklyUniformlyStrict}
that $d(Tx,Ty) < \varepsilon = d(x,y)$.

Since $X$ is finite,
there exists $k \in (0,1)$ such
that for all $x,y \in X$,
$d(Tx,Ty) \le kd(x,y)$.
\end{proof}

The following is stated as
Theorem~3.2 of~\cite{Kal-J}. 

\begin{assert}
    \label{KJthm3.2}
    Let $(X,d,\kappa)$ be a complete 
    digital metric space.
    Let $T: (X,d,\kappa) \to (X,d,\kappa)$ 
    be a weakly uniformly
strict digital contraction mapping. Then $T$ has a unique fixed point $z$.
Moreover, for any $x \in X$, $lim_{n \to \infty} T^nx = z$. 
\end{assert}

\begin{remarks}
Assertion~\ref{KJthm3.2} is trivial
if $X$ is finite and $c_1$-connected
and $d$ is an $\ell_p$ metric or
the shortest path metric.
\end{remarks}

\begin{proof}
    By Proposition~\ref{negateKJ3.2},
    $T$ is a contraction map.
    Therefore~\cite{BxSt19}, our
    hypotheses imply $T$
    is a constant map.
\end{proof}

\section{On~\cite{KamesmariEtAl}}
The main result of~\cite{KamesmariEtAl} is
Theorem~\ref{KamThm}, below.
This theorem depends on the notions
of {\em $1$-chainable} and {\em uniformly local
contractive mapping}; it seems unnecessary to
define these here.

\begin{thm}
    \label{KamThm} 
    Let $(X,d,\ell)$ be a $1$-chainable
    complete digital metric space. Let
    $T: X \to X$ be a $(1,\ell)$-uniformly locally contractive mapping. Then $T$ has
    a unique fixed point in $X$.
\end{thm}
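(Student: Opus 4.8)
The plan is to show that Theorem~\ref{KamThm} --- which is a digital restatement of a classical theorem of Edelstein on $\varepsilon$-chainable complete metric spaces, hence true as stated --- reduces to triviality under the hypotheses normally imposed on a digital metric space, namely that $d$ is an $\ell_p$ metric or the shortest path metric. Concretely, I would establish a Proposition asserting that a $(1,\ell)$-uniformly locally contractive self-map $T$ of such a space must be a \emph{constant} function, so that the existence and uniqueness of a fixed point carries no content beyond the statement ``a constant map has exactly one fixed point.''

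First I would record what the two undefined notions amount to in this setting. A metric space $(X,d)$ is $1$-chainable if any two points $x,y \in X$ are joined by a finite sequence $x = x_0, x_1, \ldots, x_m = y$ in $X$ with $d(x_{i-1},x_i) < 1$ (equivalently $\le 1$, in the variant using non-strict inequality) for each $i$; and $T$ is $(1,\ell)$-uniformly locally contractive if $d(x,y) < 1$ (resp.\ $\le 1$) implies $d(Tx,Ty) \le \lambda\, d(x,y)$ for a fixed contraction constant $\lambda \in [0,1)$. The key observation is that when $d$ is an $\ell_p$ metric on a subset of $\Z^n$, or the shortest path metric on a connected digital image, the values taken by $d$ meet the interval $[0,1)$ only at $0$; hence $d(x,y) < 1$ forces $x = y$, and $d(x,y) \le 1$ forces $d(x,y) \in \{0,1\}$.

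The main step is a short computation along the chain. Fix $x,y \in X$ and a $1$-chain $x = x_0, x_1, \ldots, x_m = y$. For each $i$ we have $d(x_{i-1},x_i) \le 1$, so $d(Tx_{i-1},Tx_i) \le \lambda\, d(x_{i-1},x_i) \le \lambda < 1$, which by the observation above forces $Tx_{i-1} = Tx_i$. Iterating along the chain gives $Tx = Tx_0 = \cdots = Tx_m = Ty$, so $T$ is constant, say $T \equiv z$, and then $z$ is its unique fixed point. (In the strict-inequality reading the hypothesis is even more degenerate: $1$-chainability alone already forces $X$ to be a singleton.) I would then follow the Proposition with a Remark, parallel to those accompanying Theorem~\ref{KJthm3.1} and Assertion~\ref{KJthm3.2}, stating that Theorem~\ref{KamThm} is trivial when $X$ is a $c_1$-connected digital image and $d$ is an $\ell_p$ metric or the shortest path metric.

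The only real obstacle is bookkeeping around the exact definitions used in~\cite{KamesmariEtAl}: whether chainability and local contractivity are stated with strict or non-strict inequalities, and whether the contraction constant is required to lie in $[0,1)$. This matters because a mismatched pair of conventions --- chainability with ``$\le 1$'' but local contractivity only for ``$d(x,y) < 1$'' --- would make the assertion \emph{false}, witnessed by $X = \{0,1\} \subset \Z$ with the usual metric and $T(0)=1$, $T(1)=0$, which is vacuously $(1,\cdot)$-uniformly locally contractive and $1$-chainable but fixed-point-free. I would therefore treat the two natural consistent conventions explicitly and record this degenerate example for the mixed one, so that no admissible reading of the hypotheses yields a non-trivial true statement.
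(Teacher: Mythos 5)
Your proposal matches the paper's treatment: the paper does not reprove Theorem~\ref{KamThm} but simply records that the assertion is correct (noting it duplicates Theorem~5.1 of~\cite{HossainEtAl}) and that it is trivial because a $(1,\ell)$-uniformly locally contractive map on such a space must be constant, a fact it attributes to~\cite{KamesmariEtAl} itself. Your chain argument for constancy under an $\ell_p$ or shortest-path metric is exactly the standard justification for that triviality remark, so you have in effect supplied the details the paper delegates to citations.
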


This assertion is correct. However, its publication is unfortunate, for the following reasons.
\begin{itemize}
\item The assertion turns out to be trivial,
since such a map must be constant; indeed,
the latter is shown in~\cite{KamesmariEtAl}.
\item Theorem~\ref{KamThm} duplicates 
      Theorem~5.1 of the 
      earlier~\cite{HossainEtAl} (a result
      later shown to be trivial 
      in~\cite{BxSt19}).
\end{itemize}

\section{On~\cite{KrishnaEtAl, KrishnaAndTatajee}}
\subsection{Geraghty contraction}
The papers~\cite{KrishnaEtAl,KrishnaAndTatajee}
focus on digital Geraghty contraction maps.

Unfortunately, these papers use the symbol
$S$ both to represent a single function
and a certain set of functions. This leads
to an unfortunate simultaneous use of 
both interpretations of this symbol.
We will try to unravel this confusion by using
the following, with $G$ as the symbol for the
set of functions discussed.

\begin{definition}
    {\rm \cite{KrishnaAndTatajee}}
    $G = \{ \, \beta: [0,\infty) \to [0,1)
    \mid \beta(t_n) \to 1 \Rightarrow 
    t_n \to 0 \, \}$.
\end{definition}

\begin{definition}
     {\rm \cite{KrishnaAndTatajee}}
     \label{GeraghtyDef}
     Let $(X,d,\kappa)$ be a digital metric
     space. The function $f: X \to X$ is
     a digital Geraghty contraction map if 
     there exists $\beta \in G$ such that
     \[ d(f(x),f(y)) \le \beta(d(x,y)) d(x,y)
        \mbox{ for all } x,y \in X.     
     \]
\end{definition}

\begin{remarks}
    It is observed in~{\rm \cite{KrishnaAndTatajee}}
    that a digital contraction map is a 
    digital Geraghty contraction map, but
    the converse is not true. However, the
    converse is true for finite $X$, since
    in this case we can replace 
    $\beta(d(x,y))$ by 
    \[\beta'(d(x,y)) = \max \{ \, \beta(d(x,y))
       \mid x,y \in X \, \} < 1,
     \]
     since
     \[ d(fx,fy) \le \beta(d(x,y)) d(x,y) \le 
        \beta'(d(x,y)) d(x,y).
     \]
\end{remarks}

The following shows an important case in which
Definition~\ref{GeraghtyDef} reduces to
triviality.

\begin{prop}
\label{GeraghtyTriv}
    Suppose $(X,d,\kappa)$ is a digital 
    metric space and $d$ is an $\ell_p$ metric
    or the shortest path metric. If
    $X$ is $c_1$-connected (whether or not
    $\kappa$ is $c_1$) then a
    digital Geraghty contraction map must
    be a constant function.
\end{prop}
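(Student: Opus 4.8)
The plan is to exploit the fact that in a $c_1$-connected digital image, any two adjacent points are at a fixed, small distance in the given metric $d$, so the Geraghty inequality forces the images of adjacent points to be strictly closer than that fixed distance — which, after iterating, collapses everything. First I would observe that if $x \adj_{c_1} y$ then $x$ and $y$ differ in exactly one coordinate by exactly $1$, so $d(x,y)=1$ whether $d$ is any $\ell_p$ metric (the sum has a single term equal to $1$) or the shortest path metric (adjacent points are joined by a path of length $1$, and a path of length $0$ would force $x=y$). Thus the set of values $\{d(x,y) : x \adj_{c_1} y\}$ is exactly $\{1\}$.

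Next, fix any digital Geraghty contraction map $f$ with witness $\beta \in G$, and suppose for contradiction that $f$ is not constant. Since $X$ is $c_1$-connected and $f$ is not constant, there must exist $c_1$-adjacent points $x,y \in X$ with $f(x) \neq f(y)$: indeed, pick $a,b$ with $f(a)\neq f(b)$, take a $c_1$-path from $a$ to $b$, and along it some consecutive pair must have distinct $f$-images. For such a pair, $d(x,y)=1$, so the Geraghty inequality gives
\[
0 < d(f(x),f(y)) \le \beta(1)\cdot 1 = \beta(1) < 1,
\]
using $\beta(1) \in [0,1)$. So far this only says $f$ shrinks that one edge; the point is that we want a contradiction, and a single inequality $0 < d(f(x),f(y)) < 1$ is not yet absurd. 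The main obstacle is therefore to amplify this: I would iterate $f$. Because $f$ is $c_1$-connected-domain-to-itself and the inequality applies to all pairs, consider $f^n$. One route: show $f$ is $(c_1,c_1)$- or at least $(c_1,\kappa)$-continuous? That is not assumed. Better: work directly with the metric. Let $M_n = \max\{d(f^n u, f^n v) : u,v \in X,\ u \adj_{c_1} v\}$ (finite if $X$ is finite; if $X$ is infinite one argues pointwise instead). Since $f(X)$ need not be $c_1$-connected, I cannot simply feed edges of $f^n(X)$ back in, so the cleaner argument is: for \emph{any} $u \adj_{c_1} v$, chain along a $c_1$-path from $f^{n}u$ to $f^{n}v$ inside $X$ (possible since $X$ is $c_1$-connected) and apply the contraction edge-by-edge; but the path length is uncontrolled, so this fails too.

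The correct amplification, which I expect to be the real content, is to use the defining property of $G$: if one can produce a sequence of \emph{pairs} $(x_n,y_n)$ with $d(x_n,y_n)$ bounded away from $0$ (here always equal to $1$, taking adjacent pairs with distinct images) while $d(f x_n, f y_n)/d(x_n,y_n) \to 1$, that contradicts $\beta \in G$; but since $d(x_n,y_n)=1$ always, $\beta(1)<1$ already bounds the ratio by $\beta(1)$, so no sequence approaches ratio $1$ — meaning the obstruction is elsewhere. So instead I would argue: $d(fx,fy)\le \beta(1)$ for every $c_1$-adjacent pair; since $fx,fy\in X\subset\Z^n$, $d(fx,fy)$ is either $0$ or at least the minimum positive value of $d$ on $X\times X$, which for an $\ell_p$ metric is $1$ (and for the shortest path metric is $1$). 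Hence $d(fx,fy)\le\beta(1)<1$ forces $d(fx,fy)=0$, i.e.\ $f(x)=f(y)$ for every $c_1$-adjacent pair $x,y$. By $c_1$-connectedness of $X$, $f$ is constant. This last step — that the metric takes no value in the open interval $(0,1)$ on points of $X$, so the strict inequality $d(fx,fy)<1$ collapses to equality $0$ — is the crux, and it is exactly where the hypothesis that $d$ is an $\ell_p$ metric or the shortest path metric is used; everything else is bookkeeping about $c_1$-adjacency and connectedness.
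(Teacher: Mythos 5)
Your final paragraph is exactly the paper's proof: for $c_1$-adjacent $x,y$ one has $d(x,y)=1$, so the Geraghty inequality gives $d(fx,fy)\le\beta(1)<1$, and since $d$ takes no value in $(0,1)$ on points of $\Z^n$ (for $\ell_p$ or the shortest path metric) this forces $d(fx,fy)=0$, whence $f$ is constant by $c_1$-connectedness. The intervening detours about iterating $f$ and amplifying the contraction are unnecessary and can be deleted; the one-step argument already closes the proof.
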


\begin{proof}
    Let $x \adj_{c_1} y$ in $X$. Then
    $d(x,y) = 1$. From
    Definition~\ref{GeraghtyDef},
    \[ d(fx,fy) < \beta(1) < 1, ~~~\mbox{ so }~~~
    d(fx,fy) = 0.
    \]
    Since $(X,c_1)$ is connected,
    the assertion follows.
\end{proof}

\subsection{Theorem of~\cite{KrishnaAndTatajee}}
\begin{thm}
    {\rm \cite{KrishnaAndTatajee}}
    \label{KandT}
    Let $(X,d,\kappa)$ be a complete digital
    metric space, where $d$ is the Euclidean
    metric. Let $f: X \to X$ be a
    digital Geraghty contraction map. Then
    $f$ has a fixed point in~$X$.
\end{thm}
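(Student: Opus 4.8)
The plan is to notice that the hypothesis ``$d$ is the Euclidean metric'' already forces $(X,d)$ to be uniformly discrete, since the Euclidean metric is the $\ell_2$ metric; consequently the completeness hypothesis is redundant (by Proposition~\ref{eventuallyConst} every uniformly discrete metric space is complete), and, more importantly, uniform discreteness is exactly what makes the Geraghty iteration collapse. So the strategy is simply to iterate $f$ from an arbitrary point and show the orbit is eventually constant.

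First I would fix $x_0 \in X$ (the statement is vacuous if $X = \emptyset$), set $x_{n+1} = f(x_n)$, and write $d_n = d(x_n, x_{n+1})$. If $d_N = 0$ for some $N$ then $x_N = x_{N+1} = f(x_N)$ and we are finished, so assume $d_n > 0$ for all $n$. From Definition~\ref{GeraghtyDef}, $d_{n+1} = d(f x_n, f x_{n+1}) \le \beta(d_n)\, d_n \le d_n$ because $\beta(d_n) \in [0,1)$; hence $\{d_n\}$ is non-increasing.

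Next I would argue $\{d_n\}$ is eventually constant. A clean route specific to the Euclidean metric: since $X \subset \Z^m$ for some $m$, every value $d(x,y)$ equals $\sqrt{k}$ for some $k \in \Z_{\ge 0}$, so the set of distances not exceeding $d_0$ is finite, and a non-increasing sequence in a finite set stabilizes. (Alternatively, the standard Geraghty step shows $d_n \to 0$: if $d_n \to c > 0$ then $\beta(d_n) \ge d_{n+1}/d_n \to 1$, which by $\beta \in G$ forces $d_n \to 0$, a contradiction; then uniform discreteness gives $d_n = 0$ eventually, against $d_n > 0$.) Either way there is $N$ with $d_n = d_N =: c$ for all $n \ge N$, and then $c = d_{N+1} \le \beta(c)\, c$ with $c > 0$ gives $1 \le \beta(c) < 1$, a contradiction. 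Thus $d_n > 0$ for all $n$ is impossible, so some $x_N$ satisfies $x_N = f(x_N)$.

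I do not expect a genuine obstacle: the theorem's whole content is that uniform discreteness reduces the Geraghty iteration to an eventually constant sequence. The only points needing a little care are disposing of the degenerate case $d_n = 0$ at the outset, and choosing how to pin down that $\{d_n\}$ stabilizes --- via the arithmetic of Euclidean distances on $\Z^m$, or via the defining property of $\beta \in G$ together with uniform discreteness. It is also worth adding a remark that, by Proposition~\ref{GeraghtyTriv}, in the frequently-assumed case that $X$ is $c_1$-connected the map $f$ must in fact be constant, so Theorem~\ref{KandT}, while correct, is trivial in that setting.
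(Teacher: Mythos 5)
Your argument is correct, and it lands on the same underlying mechanism the paper uses: the Euclidean metric on a subset of $\Z^m$ is uniformly discrete, so once the consecutive distances $d_n=d(x_n,x_{n+1})$ are forced down, the orbit stabilizes at a fixed point. The difference is one of packaging. The paper's treatment is a critique-plus-completion of the published argument: it accepts the original authors' (overlong) derivation that $\{x_n\}$ is Cauchy and converges to some $u$, and then finishes by noting that uniform discreteness gives $u=x_n=x_{n+1}=f(x_n)=f(u)$ for almost all $n$ (and supplies the omitted uniqueness step). Your version is fully self-contained: you never need convergence of $\{x_n\}$, completeness, or a Cauchy argument at all --- only that $\{d_n\}$ is non-increasing and must reach $0$, which you obtain either from the finiteness of $\{\sqrt{k}: 0\le k\le d_0^2\}$ or from the Geraghty property of $\beta$ combined with uniform discreteness. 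That is a genuine simplification consistent with the paper's broader point that completeness is vacuous here (Proposition~\ref{eventuallyConst}). Your closing observation that the result trivializes under $c_1$-connectedness matches the paper's own remark following Proposition~\ref{GeraghtyTriv}. One small note: the theorem as stated claims only existence, so omitting uniqueness is fine, though the paper does record the one-line uniqueness argument as part of its completion.
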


The assertion of this theorem is correct.
Here, we discuss flaws in the argument offered
for its proof in~\cite{KrishnaAndTatajee},
resulting in a much longer argument than
necessary.
\begin{itemize}
    \item Given $x_0 \in X$ and the sequence 
$\{ \, x_n = f(x_{n-1}) \, \}$, it is shown
that $\lim_{n \to \infty} d(x_n,x_{n+1}) = 0$ is a
decreasing sequence. Since $(X,d)$ is a complete metric
space, this suffices to conclude there exists
$u \in X$ such that $x_n \to u$.

However, instead the authors seek
to show the sequence is Cauchy by obtaining
a contradiction to the assumption that it isn't,
leading to a choice of $\varepsilon > 0$ and
a subsequence of $\{ \, x_n = f(x_{n-1}) \, \}$
with members with arbitrarily large indices
$u, v$ such that
\[ \varepsilon \le d(x_u, x_v) < \varepsilon.
\]
Rather than recognize that this gives the
desired contradiction, the authors proceed with
several more paragraphs before concluding
that they have the desired contradiction.
\item The authors neglect to complete the
      proof, failing to show the existence
      of a unique fixed point. This can be done
      as follows. We have
      \[ u = \lim_{n \to \infty} x_n =
         \lim_{n \to \infty} f(x_n).
      \]
      Since $d$ is the Euclidean metric,
      $(X,d)$ is uniformly discrete, so for almost
      all~$n$,
      \[ u = x_n = x_{n+1} = f(x_n) = f(u).
      \]
      Thus, $u$ is a fixed point.
    
      The uniqueness of $u$ as a fixed point
          of $f$ is shown as follows.
          Suppose $u'$ is a fixed point.
          Then
          \[ d(u,u') = d(fu, fu') \le 
             \beta(u,u') d(u,u')
          \]
          which implies $\beta(u,u') d(u,u')=0$
          and thus $d(u,u')=0$, so $u = u'$.  
\end{itemize}

\begin{remarks}
    In light of Proposition~\ref{GeraghtyTriv}, we
    see that there are many cases
    for which Theorem~\ref{KandT} is
    trivial.
\end{remarks}

\subsection{On~\cite{KrishnaEtAl}}
The paper~\cite{KrishnaEtAl} is concerned with
pairs of Geraghty contraction maps in digital metric spaces.

\begin{definition}
 {\rm \cite{KrishnaEtAl}}
 \label{GeraghtyPair}
 Let $(X,d,\kappa)$ be a digital metric space.
 Let $S,T: X \to X$, If there exists 
 $\beta \in G$ such that for all $x,y \in X$,
 \[ d(Sx,Sy) \le \beta(d(Tx,Ty)) d(Tx,Ty),
 \]
 then $(T,S)$ is a pair of Geraghty contraction maps.
\end{definition}

We note an important case in which
Definition~\ref{GeraghtyPair} reduces
to triviality.

\begin{prop}
\label{Gconst}
    Let $(X,d,c_1)$ be a digital metric space
    and let $(T,S)$ be a pair of Geraghty contraction maps on $X$. If 
    \begin{itemize} 
    \item $d$ is any $\ell_p$ 
    metric or the shortest path metric;
    \item $T \in C(S,c_1)$; and
    \item $(X,c_1)$ is connected,
    \end{itemize}
    then $S$ is a constant function.
\end{prop}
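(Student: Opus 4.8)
The plan is to mimic the proof of Proposition~\ref{GeraghtyTriv}, the one new ingredient being that the pair--Geraghty estimate of Definition~\ref{GeraghtyPair} controls $d(Sx,Sy)$ through $d(Tx,Ty)$ rather than through $d(x,y)$, so the first task is to pin down $d(Tx,Ty)$ on $c_1$--edges. First I would fix $x,y \in X$ with $x \adj_{c_1} y$. Since $T$ is $c_1$--continuous, the characterization of digital continuity of~\cite{Bx99} gives $Tx \adjeq_{c_1} Ty$; and because $d$ is an $\ell_p$ metric or the shortest path metric, a pair of $c_1$--adjacent points of $X$ is at distance exactly $1$ (for $\ell_p$, exactly one coordinate differs and it differs by $1$; for the shortest path metric, there is a path of length $1$ and no shorter one), while of course $d(Tx,Ty)=0$ when $Tx=Ty$. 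Hence $d(Tx,Ty) \in \{0,1\}$. This is the only place the continuity hypothesis on $T$ (written ``$T \in C(S,c_1)$'') is used: it serves precisely to keep $d(Tx,Ty)$ from exceeding $1$ along a $c_1$--edge.

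Next I would substitute into the inequality $d(Sx,Sy) \le \beta(d(Tx,Ty))\,d(Tx,Ty)$ of Definition~\ref{GeraghtyPair}. If $d(Tx,Ty)=0$ the right side is $\beta(0)\cdot 0 = 0$, so $Sx=Sy$. If $d(Tx,Ty)=1$ the right side is $\beta(1)\cdot 1 = \beta(1)$, which is $<1$ since every $\beta \in G$ has range contained in $[0,1)$. In either case $d(Sx,Sy) < 1$. Now observe that, whether $d$ is an $\ell_p$ metric on $X \subset \Z^n$ (distinct integer points differ in some coordinate by at least $1$, so $d \ge 1$) or the shortest path metric on the connected image $(X,c_1)$ (distinct points are joined only by paths of length $\ge 1$, so $d \ge 1$), distinct points of $X$ are at distance at least $1$. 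Since $Sx,Sy \in X$, the bound $d(Sx,Sy)<1$ forces $Sx=Sy$.

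Finally I would use connectedness of $(X,c_1)$. Given arbitrary $a,b \in X$, choose a $c_1$--path $a = x_0 \adj_{c_1} x_1 \adj_{c_1} \cdots \adj_{c_1} x_k = b$; applying the previous paragraph to each consecutive pair yields $Sa = Sx_0 = Sx_1 = \cdots = Sx_k = Sb$. As $a,b$ were arbitrary, $S$ is constant.

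There is no genuine obstacle here; the proof is short and parallels Proposition~\ref{GeraghtyTriv}. The two small points that need attention are (i) isolating why continuity enters—it is needed only to secure $d(Tx,Ty)\le 1$ on $c_1$--edges, so the operative reading of the hypothesis is that $T$ is $c_1$--continuous—and (ii) the separate disposal of the case $d(Tx,Ty)=0$, where the value $\beta(0)$ is irrelevant because it multiplies $0$ (one cannot simply write $d(Sx,Sy)<\beta(1)<1$ in that case).
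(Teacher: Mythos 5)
Your proposal is correct and follows essentially the same route as the paper's proof: bound $d(Tx,Ty)$ on a $c_1$-edge via the continuity of $T$, feed it into the Geraghty pair inequality to get $d(Sx,Sy)<1$ and hence $Sx=Sy$, then propagate along paths using connectedness. Your explicit case split on $d(Tx,Ty)\in\{0,1\}$ is in fact slightly more careful than the paper's one-line chain, which writes a strict inequality that degenerates when $Tx=Ty$.
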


\begin{proof}
    Let $x \adj_{c_1} y$ in $X$. By
    Definition~\ref{GeraghtyPair},
    \[ d(Sx,Sy) \le \beta(d(Tx,Ty)) (d(Tx,Ty))
       < d(Tx,Ty).
    \]
    Since $Tx \adjeq_{c_1} Ty$, we have
    $d(Sx,Sy) < 1$, so $d(Sx,Sy) = 0$. Since
    $(X,c_1)$ is connected, $S$ must be constant.
\end{proof}

The following is stated as Theorem~2.2
of~\cite{KrishnaEtAl}.

\begin{assert}
    {\rm \cite{KrishnaEtAl}}
    \label{KrishEtAlthm}
    Let $(T,S)$ be a pair of Geraghty contraction maps on $X$, where
    $(X,d,\kappa)$ is a digital metric space.
    Suppose
    \begin{itemize}
        \item $S(X) \subset T(X)$;
        \item $T$ is continuous; and
        \item $S$ and $T$ commute.
    \end{itemize}
    Then $T$ and $S$ have a common fixed point.
\end{assert}

The argument given as proof of this assertion is
flawed as follows. A sequence 
$\{ \, x_n \, \}$ is formed in $X$ such that
$\{ \, d(Tx_{n+1},Tx_n) \, \}$ is a
decreasing sequence. It is derived that
\[ \frac{d(Tx_{n+2},Tx_{n+1})}{d(Tx_{n+1},x_n)}
   \le \beta(d(Tx_{n+1},Tx_n)) < 1.
\]
It is then claimed that the latter implies
\[ \lim_{n \to \infty} 
   \frac{d(Tx_{n+2},Tx_{n+1})}{d(Tx_{n+1},x_n)} 
   = 1.
\]
No justification is given for this claim.
Therefore, Assertion~\ref{KrishEtAlthm} must
be regarded as unproven.

We note in the following that there
are important cases for which 
Assertion~\ref{KrishEtAlthm} reduces 
to triviality.

\begin{exl}
 Let $(T,S)$ be a pair of Geraghty contraction maps on $X$, where
    $(X,d,c_1)$ is a digital metric space.
    Suppose
    \begin{itemize}
        \item $T \in C(X,c_1)$;
         \item $d$ is any $\ell_p$ 
    metric or the shortest path metric;
        \item $(X,c_1)$ is connected; and
        \item $S$ and $T$ commute.
    \end{itemize}
    Then $S$ is a constant function, and
    $S$ and $T$ have a common fixed point.
\end{exl}

\begin{proof}
    By Proposition~\ref{Gconst}, $S$ is
    constant; say, $S(x) = x_0$ for all
    $x \in X$. Since $S$ and $T$ commute,
    \[ Tx_0 = TSx_0 = STx_0 = x_0.
    \]
\end{proof}

\section{On~\cite{MasmaliEtAl}}
\subsection{Admissible functions}

\begin{definition}
    {\rm \cite{SriEtAl}}
    \label{MasmaliEtAl-1functAdmiss}
    Consider functions $T: X \to X$ and
    $\alpha: X \times X \to [0,\infty)$.
    We say $T$ is {\em $\alpha$-admissible} if
    \[ x,y \in X, ~ \alpha(x,y) \ge 1 ~ \Rightarrow ~
       \alpha(Tx,Ty) \ge 1.
    \]
\end{definition}

\begin{definition}
    {\rm \cite{MasmaliEtAl}}
    \label{MasmaliDefAdmissible}
    Let $S,T: X \to X$, $\alpha: X \times X \to [0, \infty)$.
    We say $S$ is $\alpha - \beta$-admissible with respect to $T$
    if for all $x,y \in X$ we have $\alpha(Tx,Ty) \ge 1$ and $\beta(Tx,Ty) \ge 1$
    implies $\alpha(Sx,Sy) \ge 1$ and $\beta(Sx,Sy) \ge 1$.
\end{definition}
Presumably, $\beta: X \times X \to [0, \infty)$ also,
but this is not stated in the definition quoted above.

\subsection{``Theorem" 3.1 of~\cite{MasmaliEtAl}}
Let 
\[ \Phi = \left \{ \begin{array}{c} 
          \varphi: [0, \infty) \to [0, \infty) 
          \mid \varphi \mbox{ is increasing, }
           \\ t> 0 \Rightarrow \varphi(t) <t, 
           \mbox{ and }
           \varphi(0) = 0 
           \end{array} \right \}
\]

\begin{definition}
    {\rm \cite{MasmaliEtAl}}
    Let $(X,d,\rho)$ be a complete digital metric space. Let
    $T: X \to X$, $\alpha, \beta: [0, \infty) \to [0, \infty)$.
    $X$ is {\em $\alpha - \beta$ regular} if
    for every sequence $\{ \, x_n \, \}$
     in $X$ such that $x_n \to x \in X$ and
    $\alpha(x_n, x_{n+1}) \ge 1$ and 
    $\beta(x_n, x_{n+1}) \ge 1$ for all
    $n$, then there exists a subsequence
    $\{ \, x_{n_k} \, \}$ of
    $\{ \, x_n \, \}$ such that 
    $\alpha(x_{n_k}, x_{n_{k+1}}) \ge 1$,
    $\beta(x_{n_k}, x_{n_{k+1}}) \ge 1$,
    $\alpha(x,Tx) \ge 1$, and
    $\beta(x,Tx) \ge 1$.
\end{definition}

The following is stated as Theorem~3.1 of~\cite{MasmaliEtAl}.
\begin{assert}
    \label{Masmali3.1}
    Let $(X,d,\rho)$ be a complete connected digital metric space. Let
    $T: X \to X$ and let $\alpha, \beta: X \times X \to [0.\infty)$
    be such that
    \begin{enumerate}
        \item $T$ is $\alpha - \beta$ admissible;
        \item There exists $x_0 \in X$ such that $\alpha(x_0,Tx_0) \ge 1$ and $\beta(x_0,Tx_0) \ge 1$;
        \item Either $T$ is continuous or $X$ is $(\alpha - \beta)$ regular; and
        \item For some $\psi, \varphi \in \Phi$ and all $x,y \in X$, 
              \begin{equation}
              \label{Masmail3.1eq}
              \alpha(x,Tx) \beta(y,Ty) \psi(d(Tx,Ty)) \le
               \psi(d(x,y)) - \varphi(d(x,y)).
               \end{equation}
    \end{enumerate}
    Then $T$ has a fixed point. Further, if $u$ and $v$ are fixed points
    of~$T$ such that $\alpha(u,Tu) \ge 1$, $\alpha(v,Tv) \ge 1$,
    $\beta(u,Tu) \ge 1$, and $\beta(v,Tv) \ge 1$, then $u=v$.
\end{assert}

Assertion~\ref{Masmali3.1} and its 
``proof" in~\cite{MasmaliEtAl} 
are flawed as follows. Some flaws listed below are easily
corrected but are possibly confusing.
\begin{itemize}
   \item The statement of~\cite{MasmaliEtAl} labeled (1) has an ``$x$"
          that should be ``$x_0$" - the statement should be
          \[
             x_{n+1} = Tx_n = T^{n+1}x_0 \mbox{ for all } n \ge 0.
          \]
    \item There are multiple references 
    to hypothesis (3) that should refer to hypothesis (4).
    \item Statement (3) of~\cite{MasmaliEtAl} says
          \[ d(x_{n_k}, x_{m_k}) \ge \varepsilon. ~~~~~~
             \mbox{(3) of~\cite{MasmaliEtAl}}
          \]
          It is then claimed that this implies 
          \[
             d(x_{n_k - 1}, x_{m_k}) < \varepsilon. ~~~~~~
             \mbox{(4) of~\cite{MasmaliEtAl}}
          \]
          There is no justification for this claim, 
          although it could be justified
          by choosing $n_k$ as the 
          minimal index
          of a member of the 
          subsequence for a given
          $m_k$ to satisfy (3) of~\cite{MasmaliEtAl}.
    \item Suppose~(4) of~\cite{MasmaliEtAl} is
          valid. It leads to a
          3-line chain of inequalities
          in which the second line is
          marked ``(7)". The third line
          is missing ``$\le$" at its
          beginning.
    \item The above leads to
          (9) of~\cite{MasmaliEtAl}:
    \[ \psi(d(x_{n_k},x_{m_k})) \le
    \psi(d(x_{n_k - 1},x_{m_k - 1})) -
    \phi(d(x_{n_k - 1},x_{m_k - 1}))
       \]
       which is then taken to a limit
       in order to obtain the contradiction $\varepsilon = 0$.
       But this limit depends on an
       unstated hypothesis that
       the functions $\psi$ and $\phi$
       are continuous.
   \item The contradiction mentioned 
         above resulted from assuming
         $\varepsilon > 0$. Therefore
         $\varepsilon = 0$. It follows
         that $\{x_n\}$ is a Cauchy
         sequence, and since $X$ is
         complete, $x_n \to x \in X$.
         The authors spend additional
         paragraphs to argue that
         $x$ is a fixed point of $T$;
         however, 
         $(X,d,\rho)$ is uniformly
         discrete, so $x_n = x_{n+1} = 
         Tx_n = x$
         for almost all~$n$.
    \item Since the authors' argument
          for the existence of a fixed
          point as a consequence of 
         $\{x_n\}$ being Cauchy is their
         only use of hypothesis 3)
         (recall other references to
         hypothesis~3) should be references to
         hypothesis~4)), the above shows hypothesis~3) is
         unnecessary.
\end{itemize}
Thus, the assertion as written 
is unproven. By modifying~\cite{MasmaliEtAl} as
discussed above, we get a 
proof of the following version of the
assertion.

\begin{thm}
    \label{Masmali3.1CorrectV}
    Let $(X,d,\rho)$ be a complete connected digital metric space. Let
    $T: X \to X$ and let $\alpha, \beta: X \times X \to [0.\infty)$
    be such that
    \begin{enumerate}
        \item $T$ is $\alpha - \beta$ admissible;
        \item There exists $x_0 \in X$ such that $\alpha(x_0,Tx_0) \ge 1$ and $\beta(x_0,Tx_0) \ge 1$;
 and
        \item For some continuous 
        functions $\psi, \varphi \in 
        \Phi$ and all $x,y \in X$, 
              \begin{equation}
              \label{Masmail3.1eq}
              \alpha(x,Tx) \beta(y,Ty) \psi(d(Tx,Ty)) \le
               \psi(d(x,y)) - \varphi(d(x,y)).
               \end{equation}
    \end{enumerate}
    Then $T$ has a fixed point. Further, if $u$ and $v$ are fixed points
    of~$T$ such that $\alpha(u,Tu) \ge 1$, $\alpha(v,Tv) \ge 1$,
    $\beta(u,Tu) \ge 1$, and $\beta(v,Tv) \ge 1$, then $u=v$.
\end{thm}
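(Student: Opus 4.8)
The plan is to follow the corrected version of the argument in \cite{MasmaliEtAl}, streamlined by exploiting the fact that a complete digital metric space is uniformly discrete when $d$ is a standard metric — but since no restriction on $d$ is stated, I will instead carry out the classical Banach-type iteration directly, using the continuity of $\psi$ and $\varphi$ that is now part of the hypothesis. First I would fix $x_0 \in X$ as in hypothesis (2) and define the Picard sequence $x_{n+1} = Tx_n = T^{n+1}x_0$. Using $\alpha$-$\beta$-admissibility (hypothesis (1)) together with $\alpha(x_0,Tx_0)\ge 1$, $\beta(x_0,Tx_0)\ge 1$, an induction gives $\alpha(x_n,x_{n+1})\ge 1$ and $\beta(x_n,x_{n+1})\ge 1$ for all $n$. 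Plugging $x=x_n$, $y=x_{n+1}$ into inequality~(\ref{Masmail3.1eq}) and using these admissibility bounds yields
\[
\psi(d(x_{n+1},x_{n+2})) \le \alpha(x_n,Tx_n)\beta(x_{n+1},Tx_{n+1})\psi(d(x_{n+1},x_{n+2})) \le \psi(d(x_n,x_{n+1})) - \varphi(d(x_n,x_{n+1})).
\]
Hence $\{\psi(d(x_n,x_{n+1}))\}$ is nonincreasing and nonnegative, so it converges; since $t>0 \Rightarrow \varphi(t)<t$ and $\varphi$ is increasing with $\varphi(0)=0$, a standard squeeze argument (using continuity of $\varphi$) forces $d(x_n,x_{n+1}) \to 0$.

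Next I would show $\{x_n\}$ is Cauchy by contradiction: if not, there exist $\varepsilon>0$ and indices $m_k > n_k \to \infty$ with $d(x_{n_k},x_{m_k}) \ge \varepsilon$, and — choosing $m_k$ minimal with this property for each $n_k$ — also $d(x_{n_k},x_{m_k-1}) < \varepsilon$. A triangle-inequality estimate combined with $d(x_n,x_{n+1})\to 0$ gives $d(x_{n_k},x_{m_k}) \to \varepsilon$ and likewise $d(x_{n_k-1},x_{m_k-1}) \to \varepsilon$. Applying~(\ref{Masmail3.1eq}) with $x = x_{n_k-1}$, $y = x_{m_k-1}$ and the admissibility bounds yields
\[
\psi(d(x_{n_k},x_{m_k})) \le \psi(d(x_{n_k-1},x_{m_k-1})) - \varphi(d(x_{n_k-1},x_{m_k-1})),
\]
and passing to the limit using the continuity of $\psi$ and $\varphi$ gives $\psi(\varepsilon) \le \psi(\varepsilon) - \varphi(\varepsilon)$, i.e. $\varphi(\varepsilon) \le 0$, contradicting $\varphi(\varepsilon) > 0$ (as $\varepsilon>0$). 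So $\{x_n\}$ is Cauchy, and by completeness $x_n \to x$ for some $x \in X$.

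It remains to show $x$ is a fixed point. I would argue that $d(x_n,x_{n+1})\to 0$ together with $x_n \to x$ already pins things down: in a digital metric space with a standard metric this is immediate since $x_n$ is eventually constant, but in general I would instead use~(\ref{Masmail3.1eq}) one more time with $x = x$ and $y = x_n$, combined with $\psi(d(Tx, x_{n+1})) \le \psi(d(x,x_n)) - \varphi(d(x,x_n)) \le \psi(d(x,x_n))$ and continuity of $\psi$, to force $d(Tx, x_{n+1}) \to 0$; since also $x_{n+1}\to x$, we get $Tx = x$. (One must check $\alpha(x,Tx)\ge 1$ and $\beta(x,Tx)\ge 1$ hold so that the coefficient in~(\ref{Masmail3.1eq}) does not obstruct this step — this is where hypothesis (2) of Assertion~\ref{Masmali3.1} about regularity was used, but with $\psi,\varphi$ continuous the direct estimate above circumvents it, matching the remark in the critique that hypothesis (3) is unnecessary.) Finally, for uniqueness, if $u,v$ are fixed points with the stated admissibility conditions, then~(\ref{Masmail3.1eq}) gives $\psi(d(u,v)) = \psi(d(Tu,Tv)) \le \psi(d(u,v)) - \varphi(d(u,v))$, so $\varphi(d(u,v)) \le 0$, forcing $d(u,v) = 0$ and $u = v$.

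\textbf{Main obstacle.} The delicate point is the Cauchy argument: one must select $m_k$ minimal so that both $d(x_{n_k},x_{m_k})\ge\varepsilon$ and $d(x_{n_k},x_{m_k-1})<\varepsilon$ hold, and then verify that both $d(x_{n_k},x_{m_k})$ and $d(x_{n_k-1},x_{m_k-1})$ converge to exactly $\varepsilon$ (not merely lie in $[\varepsilon, \varepsilon + o(1))$), since the contradiction step needs the limit to be a single value $\varepsilon$ on which to evaluate the continuous functions $\psi$ and $\varphi$. This is exactly the gap flagged in the critique of~\cite{MasmaliEtAl} (the unjustified passage from their (3) to their (4), and the tacit continuity assumption), so making these two points airtight is the crux of the corrected proof.
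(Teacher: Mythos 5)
Your proposal follows essentially the same route the paper intends: the paper does not write out a fresh proof but states that the theorem follows from the argument of the source paper once one (i) chooses the indices in the Cauchy-by-contradiction step minimally so that the companion strict inequality actually holds, (ii) adds continuity of $\psi$ and $\varphi$ so the limit in the key inequality can be taken, and (iii) obtains the fixed point from the fact that a Cauchy sequence in the (uniformly discrete) digital metric space is eventually constant, which is also why hypothesis (3) of the original assertion can be dropped. You reconstruct all three repairs correctly, and your iteration, monotonicity, and uniqueness steps are fine.

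The one genuine gap is in your alternative, ``general'' argument for the fixed-point step. Applying inequality~(\ref{Masmail3.1eq}) with $x$ equal to the limit point and $y=x_n$ puts the factor $\alpha(x,Tx)\beta(x_n,Tx_n)$ on the left, and you can only discard it if $\alpha(x,Tx)\ge 1$; if, say, $\alpha(x,Tx)=0$, the inequality reads $0\le\psi(d(x,x_n))-\varphi(d(x,x_n))$ and tells you nothing about $d(Tx,x_{n+1})$. Continuity of $\psi$ and $\varphi$ does not circumvent this — controlling $\alpha(x,Tx)$ is precisely what the discarded continuity/regularity hypothesis was for. So you must commit to the route you mention parenthetically: since $d$ is the Euclidean metric on a subset of $\Z^n$ (per the source paper's definition of a digital metric space), $(X,d)$ is uniformly discrete, the Cauchy sequence is eventually constant, and $x=x_n=Tx_{n-1}=Tx$ for large $n$. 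With that as the actual argument for the fixed point, your proof is complete and matches the paper's.
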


\subsection{``Theorem" 3.2 of~\cite{MasmaliEtAl}}
\begin{definition}
    \label{MasmaliAlphaEtcContractive}
    {\rm ~\cite{MasmaliEtAl}}
    Let $(X,d,\rho)$ be a complete digital
    metric space and let $S,T: X \to X$
    and $\alpha,\beta: X \times X \to 
    [0,\infty)$. The pair $(S,T)$ is a pair
    of {\em $\alpha - \beta - \psi - \varphi$
    contractive mappings}, where
    $\psi, \varphi \in \Phi$, 
    if
    \[ \alpha(x,Tx) \beta(y,Ty) \psi(d(Sx,Sy))
       \le \psi(d(Tx,Ty)) - \varphi(d(Tx,Ty))
        \]
    for all $x,y \in X$.
\end{definition}

\begin{remarks}
    It seems likely that there should
    be further restrictions on
    $\alpha$ and $\beta$. E.g.,
    given any pair $S,T: X \to X$
    and any $\psi,\phi$ such that
    $\psi(t) \ge \phi(t)$ for all 
    $t \ge 0$, if either of
    $\alpha$ or $\beta$ is the constant
    function with value 0, then
    according to 
Definition~\ref{MasmaliAlphaEtcContractive},
    $(S,T)$ is a pair
    of $\alpha - \beta - \psi - \varphi$
    contractive mappings.
\end{remarks}

The following is stated as Theorem 3.2 of~\cite{MasmaliEtAl}.

\begin{assert}
    \label{Masmali3.2}
    Let $(X,d,\rho)$ be a complete digital metric space. Let
    $S,T: X \to X$ be
    be $\alpha - \beta - \psi - \varphi$ 
    mappings for $\alpha, \beta: X \times X \to [0, \infty)$
    such that
    \begin{enumerate}
        \item $S(X) \subset T(X)$;
        \item $S$ is $\alpha - \beta$-admissible with respect to $T$;
        \item there exists $x_0 \in X$
        such that 
        $\alpha(Tx_0,Sx_0) \ge 1$ and
        $\beta(Tx_0,Sx_0) \ge 1$,
        and
        \item we have
              \[
                 \alpha(x,Tx) \beta(y,Ty) \psi(d(Sx,Sy)) \le 
                 \psi(d(Tx,Ty)) - \varphi(d(Tx,Ty))
              \]
        \item If $\{ \, Tx_n \, \} \subset X$ such that
              $\alpha(Tx_n, Tx_{n+1}) \ge 1$ and
              $\beta(Tx_n, Tx_{n+1}) \ge 1$ for all $n$ and
              $Tx_n \to_{n \to \infty} Tx$, then 
              there exists a subsequence 
              $\{ \,Tx_{n(k)} \, \}$ of $\{ \,Tx_n \, \}$
              and $z \in X$
              such that $\alpha(Tx_{n(k)}, Tz) \ge 1$ and
              $\beta(Tx_{n(k)}, Tz) \ge 1$ for all $k$.
        \item $T(X)$ is closed.
    \end{enumerate}
    Then $S$ and $T$ have a 
    coincidence point.
\end{assert}

Among the flaws of Assertion~\ref{Masmali3.2} as presented in~\cite{MasmaliEtAl} are the
following.
\begin{itemize}
    \item It is assumed in~\ref{Masmali3.2} (see that
          paper's Definition~2.3) that $d$ is the
          Euclidean metric, which is uniformly discrete
          on subsets of $\Z^n$. Therefore, the
          assumption of a discrete metric space need
          not be stated; the hypothesis that $T(X)$ 
          is closed, is unnecessary; and $(X,d)$ may be
          more generally assumed to be uniformly discrete.
    \item At the line marked ``(11)", there is no clear 
          justification for the claim 
          \[ x_n = Sx_n.
          \]
          This error propagates to the lines marked ``(14)".
       \item At the chain of inequalities marked ``(14)" and its
             subsequent paragraph, the error 
             noted at the line marked ``(11)" results in the conclusion 
             that $lim_{n \to \infty} d(Sx_n, Sx_{n+1})$ is 0, i.e.,
             that the line marked ``(12)" is valid.
\end{itemize}
Thus, the error noted at ``(11)" propagates 
through the ``proof." We must therefore regard
Assertion~\ref{Masmali3.2} as unproven.

\subsection{``Theorem" 3.3 of~\cite{MasmaliEtAl}}
The following is stated as Theorem~3.3
of~\cite{MasmaliEtAl}.

\begin{assert}
    \label{Masmali3.3}
    Let $(X,d,\rho)$ be a complete digital 
    metric space and
    $S,T: X \to X$ and $\alpha, \beta: X \times X \to [0, \infty)$ be mappings such that
    \begin{enumerate}
        \item $S(X) \subset T(X)$;
        \item $S$ is $\alpha - \beta$ admissible
              with respect to $T$;
        \item there exists $x_0 \in X$ such
              that $\alpha(Tx_0, Sx_0) \ge 1$
              and $\beta(Tx_0, Sx_0) \ge 1$;
        \item there exist $\psi, \varphi \in \Phi$
              such that 
              \[ \alpha(Tx,Ty) \psi(d(Sx,Sy)) \le     \psi(M(x,y)) - \varphi(M(x,y))
                            \]
              for all $x,y \in X$, where
              \[ M(x,y) = \max \{ \, d(Tx,Ty),
                 d(Tx,Sx), d(Sy,Ty), d(Sx,Ty)
                 \, \};
              \]
        \item If $\{ \, x_n \, \}$ is a sequence
              in $X$ such that 
              $\alpha(x_n,x_{n+1}) \ge 1$ and
              $\beta(x_n,x_{n+1}) \ge 1$ for all
              $n$ and 
              $Tx_n \to_{n \to \infty} Tx \in T(X)$,
              then there is a subsequence
              $\{ \, x_{n(k)} \, \}$ of 
              $\{ \, x_n \, \}$ such that
              $\alpha(Tx_{n(k)},Tz) \ge 1$ and
              $\beta(Tx_{n(k)},Tz) \ge 1$ for 
              all~$k$; and
        \item T(X) is closed.
    \end{enumerate}
    Then $S$ and $T$ have a coincidence point.
\end{assert}

The argument in~\cite{MasmaliEtAl} for
Assertion~\ref{Masmali3.3} is flawed as follows.
Statement~(14) 
of~\cite{MasmaliEtAl} 
is used to justify the 
claim that
          \[ d(Sx_n,Sx_{n+1}) \to_{n \to \infty} 0.
          \]
However, as noted above,
statement (14) is not 
correctly derived.
Thus
Assertion~\ref{Masmali3.3} is unproven.

\subsection{``Theorem" 3.4 of~\cite{MasmaliEtAl}}
The following is stated as Theorem~3.4 
of~\cite{MasmaliEtAl}.

\begin{assert}
    \label{Masmali3.4}
        Let $(X,d,\rho)$ be a complete digital 
    metric space and
    $S,T: X \to X$ and $\alpha, \beta: X \times X \to [0, \infty)$ be mappings such that
    \begin{enumerate}
        \item $S(X) \subset T(X)$;
        \item $S$ is $\alpha - \beta$ admissible
              with respect to $T$;
        \item there exists $x_0 \in X$ such
              that $\alpha(Tx_0, Sx_0) \ge 1$;
        \item there exist $\psi, \varphi \in \Phi$
              such that 
              \[ \alpha(x,Tx) \beta(y,Ty) \psi(d(Sx,Sy)) \le     \psi(M(x,y)) - \varphi(M(x,y))
                            \]
              for all $x,y \in X$, where
              \[ M(x,y) = \max \{ \, d(Sx,Sy),
                 d(Tx,Sx), d(Sy,Ty),
                 \frac{d(Sx,Ty) + d(Sy,Tx)}{2}
                 \, \};
              \]
        \item If $\{ \, x_n \, \}$ is a sequence
              in $X$ such that 
              $\alpha(x_n,x_{n+1}) \ge 1$ for all
              $n$ and 
              $Tx_n \to_{n \to \infty} Tx \in T(X)$,
              then there is a subsequence
              $\{ \, x_{n(k)} \, \}$ of 
              $\{ \, x_n \, \}$ such that
              $\alpha(Tx_{n(k)},Tz) \ge 1$ for 
              all~$k$; and
        \item T(X) is closed.
    \end{enumerate}
    Then $S$ and $T$ have a coincidence point.
\end{assert}

The argument given in~\cite{MasmaliEtAl} as
proof of Assertion~\ref{Masmali3.4} is that
it follows immediately from 
Assertion~\ref{Masmali3.2}. As we have shown
above that the latter is unproven, it follows
that Assertion~\ref{Masmali3.4} is unproven.

\subsection{``Theorem" 3.5 of~\cite{MasmaliEtAl}}
The following is stated as Theorem~3.5 
of~\cite{MasmaliEtAl}.

\begin{assert}
    \label{Masmali3.5}
    In addition to the hypotheses of
    Theorem~3.2 {\em [Assertion~\ref{Masmali3.2} 
    of the current paper]}, suppose that for
    each pair $x,y$ of common fixed points of
    $S$ and $T$ there exists $z \in X$ such
    that 
    \begin{equation}
    \label{Masmali3.5Assume}
        \mbox{$\alpha(Tx,Tz) \ge 1$,~~ 
    $\beta(Tx,Tz) \ge 1$,~~ $\alpha(Ty,Tz) \ge 1$,
    ~~ $\beta(Ty,Tz) \ge 1$,}
    \end{equation}
     and $S$ and $T$
    commute at coincidence points. Then
    $S$ and $T$ have a unique common fixed
    point.
\end{assert}

Among the flaws of the argument offered as proof 
in~\cite{MasmaliEtAl} of
Assertion~\ref{Masmali3.5}, we find
the following. The symbol ``$z$" is introduced with
          two distinct meanings: as stated above among
          the hypotheses, and then in the claim ``... there
          exists $z \in X$ such that
          $\lim_{n \to \infty} Tz_n = Tz$."
No reason is given to believe that the
latter point named ``$z$" must be the
same as the point of that name
in hypothesis~(\ref{Masmali3.5Assume}). The
argument proceeds assuming the symbol represents
both the first ``$z$" and the second ``$z$".

We conclude that Assertion~\ref{Masmali3.5} is
unproven.

\subsection{``Theorems" 4.1 and 4.2 
of~\cite{MasmaliEtAl}}
The following are stated as Theorem~4.1 and
Theorem~4.2 of~\cite{MasmaliEtAl}.

\begin{assert}
    \label{Masmali4.1}
    Let $(X,d,\rho)$ be a complete digital
    metric space. Let $A$ and $B$ be nonempty
    closed subsets of $X$. Suppose
    $\alpha: X \times X \to [0,\infty)$
    and $T: A \cup B \to A \cup B$ are such that
    \begin{enumerate}
        \item $T(A) \subset B$ and
              $T(B) \subset A$;
        \item if $\alpha(x,y) \ge 1$ then
              $\alpha(Tx,Ty) \ge 1$;
        \item there exists $x_0 \in X$
              such that $\alpha(x_0,Tx_0) \ge 1$;
        \item $T$ is continuous or $X$ is
              $\alpha$-regular;
        \item for some $\psi,\varphi \in \Phi$, 
              $\alpha(x,y) \psi(d(Tx,Ty)) \le 
               \psi(d(x,y)) - \phi(d(x,y))$ for
               all $x,y \in X$.
    \end{enumerate}
    Then $T$ has a fixed point in~$A \cap B$.
    Further if $u$ and $v$ are fixed points
    of $T$ such that $\alpha(u,Tu) \ge 1$,
    $\alpha(v,Tv) \ge 1$, $\beta(u,Tu) \ge 1$,
    and $\beta(v,Tv) \ge 1$, then $u=v$.
\end{assert}

\begin{assert}
    \label{Masmali4.2}
    Let $(X,d,\rho)$ be a complete digital
    metric space. Let $A$ and $B$ be nonempty
    closed subsets of $X$. 
    Let $Y = A \cup B$ and let $S,T: Y \to Y$ satisfy
    \begin{itemize}
        \item $T(A)$ and $T(B)$ are closed;
        \item $S(A) \subset T(B)$ and
              $S(B) \subset T(A)$;
        \item $T$ is one-to-one;
        \item for some $\psi, \varphi \in \Phi$,
          \[ \psi(d(Sx,Sy)) \le 
              \psi(M(x,y)) - \varphi(M(x,y))~~~\mbox{for all }
              x,y \in A \times B;          
          \]
          {\em [presumably, the latter qualification is
          meant to be $x,y \in A \cup B$]}, where
          \[ M(x,y) = \max \{ \, d(Tx,Ty), d(Tx,Sx), d(Sy,Ty), d(Sx,Ty)
             \, \}.    
          \]
          Then $S$ and $T$ have a coincidence point in $A \cap B$.
          Further, if $S$ and $T$ commute at their coincidence point, then $S$ and $T$ have a unique common fixed point in 
          $A \cap B$.
    \end{itemize}
\end{assert}

It seems likely that there should be a hypothesis
          that $A \cap B \neq \emptyset$ in both
          Assertion~\ref{Masmali4.1} and
          Assertion~\ref{Masmali4.2}.
More definitely, the presentations of these 
assertions are flawed as follows.
The argument offered
          in~\cite{MasmaliEtAl} for
          Assertion~\ref{Masmali4.1} depends
          on Assertion~\ref{Masmali3.1}, and
          the argument offered
          in~\cite{MasmaliEtAl} for
          Assertion~\ref{Masmali4.2}
          depends on Assertion~\ref{Masmali3.2}.
Since we have shown above that 
Assertions~\ref{Masmali3.1} and~\ref{Masmali3.2}
are unproven, it follows
that Assertions~\ref{Masmali4.1} 
and~\ref{Masmali4.2} are unproven.

\subsection{Corollaries 4.3 and 4.4 of~\cite{MasmaliEtAl}}
The following Assertions~\ref{Masmali4.3} 
and~\ref{Masmali4.4} are stated in~\cite{MasmaliEtAl}
as Corollaries~4.3 and~4.4, respectively,
as immediate consequences of Assertion~\ref{Masmali4.2}.
Since we have shown above that
Assertion~\ref{Masmali4.2} is unproven,
it follows that Assertions~\ref{Masmali4.3} 
and~\ref{Masmali4.4} are unproven.

\begin{assert}
    \label{Masmali4.3}
    Let $(X,d,\rho)$ be a complete digital metric space.
    Let $A$ and $B$ be nonempty closed subsets of $X$.
    Let $Y = A \cup B$ and $S, T: Y \to Y$, satisfying the following. 
    \begin{itemize}
        \item $T(A)$ and $T(B)$ are closed.
        \item $S(A) \subset T(B)$ and $S(B) \subset T(A)$.
        \item $T$ is one-to-one.
        \item There exist $\psi, \varphi \in \Phi$ such that
            \[ \psi(d(Sx,Sy)) \le \psi(M(x,y)) - \phi(M(x,y))~~~
                \mbox{for all } x,y \in Y,
            \]
            where
            \[ M(x,y) = \max \{ \, d(Sx,Sy), d(Tx,Sx), d(Sy,Ty),
               \frac{d(Sx,Ty) + d(Sy,Tx)}{2} \, \}.
            \]
            Then $S$ and $T$ have a coincidence point in $A \cap B$.
            Further, if $S$ and $T$ commute at their coincidence point, then $S$ and $T$ have a unique common fixed point in
            $A \cap B$.
    \end{itemize}
\end{assert}

\begin{assert}
    \label{Masmali4.4}
        Let $(X,d,\rho)$ be a complete digital metric space.
    Let $A$ and $B$ be nonempty closed subsets of $X$.
    Let $Y = A \cup B$ and $S, T: Y \to Y$, satisfying the following. 
    \begin{itemize}
        \item $T(A)$ and $T(B)$ are closed.
        \item $S(A) \subset T(B)$ and $S(B) \subset T(A)$.
        \item $T$ is one-to-one.
        \item There exist $\psi, \varphi \in \Phi$ such that
            \[
            \psi(d(Sx,Sy)) \le \psi(d(Tx,Ty)) - \varphi(d(Tx,Ty))~~~
                \mbox{for all } x,y \in Y.
            \]
            Then $S$ and $T$ have a coincidence point
            in $A \cap B$. Further, if $S$ and $T$ 
            commute at their coincidence point, then
            $S$ and $T$ have a unique common fixed 
            point in $A \cap B$.
    \end{itemize}
\end{assert}

\section{On~\cite{Shukla}}
\subsection{``Theorem" 3.1}
The following is stated as Theorem~3.1 of~\cite{Shukla}.

\begin{assert}
\label{Shukla3.1}
Let $(X,d, \kappa)$ be a digital metric space and $S,T$ be self maps on $X$ 
satisfying
\[ d(Sx,Ty) \le \alpha d(x, Sx) + d(y,Ty) \mbox{ for all }
   x, y \in X \mbox{ and } 0 < \alpha < 1/2.
\]
Then $S$ and $T$ have a unique common fixed point in $X$.
\end{assert}

That this assertion is incorrect is shown by the following.

\begin{exl}
Let $D$ be the ``diamond," 
\[ D = \{ \, (1,0), (0,1), (-1,0), (0,-1) \, \},
\]
a $c_2$-digital simple closed curve. 

Let $S = \id_D$ and let $T(x) = -x$.
If $d$ is the Euclidean metric, we have
\[ d(Sx,Ty) \le diameter(D) = 2 = \alpha (0) + 2 =
    \alpha d(x,Sx) + d(y,Ty). 
\]
Since $T$ has no fixed point in $D$, we have established that
Assertion~\ref{Shukla3.1} is not generally valid.
\end{exl}

\subsection{``Theorem" 3.2}
The following is stated as Theorem~3.2 of~\cite{Shukla}.

\begin{assert}
\label{Shukla3.2}
    Let $(X,d,\kappa)$ be a digital metric space and let
    $S,T: X \to X$ such that
    \[ d(Sx,Ty) \le \alpha d(x,Ty) + d(y,Sx) 
    \]
    for all $x,y \in X$ and $0 < \alpha < 1/2$.
    Then, $S$ and $T$ have a unique common fixed point in $X$.
\end{assert}

There are multiple errors in the argument of~\cite{Shukla} offered
as proof of Assertion~\ref{Shukla3.1}. We quote one section of this argument,
with labeled lines. \newline \newline
Begin quote:
\begin{quote}
Consider
\begin{equation}
\label{a}
d(x_1,x_2) = d(Sx_0,Tx_1)
\end{equation}
Now
\begin{equation}
\label{b}
d(x_1,x_2) \le \alpha d(x_0,Tx_1) + d(x_1,Sx_0)
\end{equation}
\begin{equation}
\label{c}
d(x_1,x_2) \le \alpha d(x_0,x_2) + d(x_1,x_1)
\end{equation}
\begin{equation}
\label{d}
d(x_1,x_2) \le \alpha d(x_0,x_1) + d(x_1,x_2)
\end{equation}
\begin{equation}
\label{e}
d(x_1,x_2) - \alpha d(x_1,x_2) \le \alpha d(x_0,x_1)
\end{equation}
\begin{equation}
\label{f}
(1 - \alpha)d(x_1,x_2) \le \alpha d(x_0,x_1)
\end{equation}
\end{quote}
End quote \newline

Statement~(\ref{d}) does not appear
correctly derived from~(\ref{c}), 
and is equivalent to the unhelpful
    \[ 0 \le \alpha d(x_0,x_1)
    \]

Clearly,~(\ref{e}) is not
          correctly derived
          from~(\ref{d}).
          
Near the bottom of page 229
of~\cite{Shukla}, we see
          \begin{quote}
              $\ldots$ there is a point $u \in X$ such that $x_n \to u$.
              Therefore, subsequence $<Sx_{2n}> \to u \ldots$
              and $<T_{2n+1}> \to u$
              since $S$ and $T$ are
              $(\kappa,\kappa)$-continuous ... we have, $Su = u$ .... 
          \end{quote}
          This is incorrect reasoning. The author does not say if the undefined term
          ``map" is assumed to include a hypothesis of digital continuity; nor
          is continuity proven. Further, 
          Example~\ref{nonconverge}, given below, provides 
          a counterexample to the claim that such subsequences imply
          the existence of a fixed point, even if $S$ is digitally continuous.
          Thus, the existence of a fixed point is not established.

In statements~(1) and~(2) on page 230, there are instances of ``="
          that should be ``$\le$".
          
Even if we assume common fixed points $u,v$ of $S$ and $T$, the
argument provided on page 230 to show $u$ and $v$ coincide is incorrect. The
author claims the inequality $d(u,v) - d(v,u) \le 0$ implies
$d(u,v) = 0$, but clearly it does not.

Thus, we conclude that Assertion~\ref{Shukla3.1} is unproven.

\begin{exl}
    \label{nonconverge}
    Let $S = T: \N \to \N$ be the function 
    \[ S(n) = T(n) = \left \{ \begin{array}{ll}
        0 & \mbox{if $n=1$;} \\
        1 & \mbox{if $n \neq 1$}.
    \end{array} \right .  
    \]
    Let $x_n = n$. $S$ and $T$ are both
    $c_1$-continuous. Also,
    $Sx_{2n} \to 1$
    and $Tx_{2n+1} \to 1$ but
    $S(1) \neq 1$. Further, neither
    $S$ nor $T$ has a fixed point.
    \end{exl}

\subsection{``Theorem" 3.3}
The following is stated as Theorem~3.3 in~\cite{Shukla}.
\begin{assert}
    \label{Shukla3.3}
    Let $(X,d,\kappa)$ be a digital metric space and let $S,T$ be
    self-maps on $X$ such that
    \[
    d(Sx,Ty) \le a d(x,Sx) + b d(y,Ty) + c d(x,y) \mbox{ for all } x,y \in X,
    \]
    where $a,b,c$ are nonnegative real numbers such that
    $a + b + c < 1$. Then $S$ and $T$ have a unique common fixed point in $X$.
\end{assert}

The argument offered in~\cite{Shukla} as proof of this assertion is flawed
as follows.
\begin{itemize}
    \item Errors similar to above for Assertion~\ref{Shukla3.2}:
          it is incorrectly claimed that (unestablished 
          digital) continuity and
          $x_n \to u$ imply $Sx_{2n} \to u$ and $Tx_{2n+1} \to u$. From this
          is wrongly (see Example~\ref{nonconverge}) concluded that $u$ 
          is a common fixed point of $S$ and $T$.
    \item The argument for uniqueness of any fixed point has a correctible
          error:
          \[
            ``d(u,v) = cd(u,v)" \mbox{ should be } ``d(u,v) \le cd(u,v)",
          \]
          which does imply the desired conclusion, $d(u,v)=0$.
\end{itemize}
Due to the errors discussed in the first bullet,
Assertion~\ref{Shukla3.3} is unproven.

\subsection{``Theorems" 3.4 and 3.5}
We discuss two assertions of~\cite{Shukla}
whose arguments given as proofs contain the same
errors.

The following is stated in~\cite{Shukla} as Theorem~3.4.

\begin{assert}
    \label{Shukla3.4}
    Let $(X,d,\kappa)$ be a digital metric space and let $S$ be a
    self-map on $X$ such that
    \[
    d(Sx,Sy) \le \frac{[a d(y,Sy)][1+d(x,Sx)]}{1+d(x,y)} + b d(x,y)
    \]
    for all $x,y \in X$, where
    $a,b \ge 0$ and $a+b < 1$.
    Then $S$ has a unique fixed point in $X$.
\end{assert}

The following is stated in~\cite{Shukla} as Theorem 3.5.

\begin{assert}
    \label{Shukla3.5}
    Let $(X,d,\kappa)$ be a digital metric space and let $S$ be a
    self-map on $X$ such that
    \[
    d(Sx,Sy) \le \frac{[a d(y,Sy)][1+d(x,Sx)]}{1+d(x,y)} + b d(x,y) +
                  c \frac{d(y,Sy)+d(y,Sx)}{d(y,Sy)d(y,Sx)}
    \]
    for all $x,y \in X$, where $a,b,c$ are nonnegative and $a+b+c < 1$.
    Then $S$ has a unique fixed point in $X$.
\end{assert}

The arguments offered in~\cite{Shukla} for these
assertions are both flawed as follows.
\begin{itemize}
   \item Errors similar to above for Assertions~\ref{Shukla3.2}
          and~\ref{Shukla3.3}:
          it is incorrectly claimed that (questionable 
          digital) continuity and
          $x_n \to u$ imply $Sx_{2n} \to u$ and $Sx_{2n+1} \to u$. From this
          is wrongly (see Example~\ref{nonconverge}) concluded that $u$ 
          is a fixed point of $S$.
    \item Despite the author's claim, the statement 
          $d(u,u)=0$ fails to establish uniqueness of a 
          fixed point of $S$; one must show $d(u,v)=0$
          for $u,v \in \Fix(S)$.
\end{itemize}
Thus, Assertions~\ref{Shukla3.4} and~\ref{Shukla3.5} 
are unproven.

\section{Further remarks}
We have continued the work
of~\cite{BxSt19,Bx19,Bx19-3,Bx20,Bx22}
in discussing flaws in papers rooted
in the notion of a digital metric space.
The papers we have considered have many
errors and assertions that turn out to be
trivial.

Although authors are responsible for
their errors and other shortcomings,
it is clear that many of the papers
studied in the current paper were
given inadequate review.

\end{document}